\def\XXint#1#2#3{{\setbox0=\hbox{$#1{#2#3}{\int}$ }
\vcenter{\hbox{$#2#3$ }}\kern-.6\wd0}}
\long\def\symbolfootnote[#1]#2{\begingroup%
\def\thefootnote{\fnsymbol{footnote}}\footnote[#1]{#2}\endgroup}
\newtheoremstyle{remark}
  {}{}{}{}{\bfseries}{.}{.5em}{{\thmname{#1 }}{\thmnumber{#2}}{\thmnote{ (#3)}}}
\theoremstyle{remboldstyle}
\newtheorem{tw}{Theorem}[section]
\newtheorem{defi}[tw]{Definition}
\newtheorem{lem}[tw]{Lemma}
\newtheorem{cor}[tw]{Corollary}
\newtheorem{prop}[tw]{Proposition}
\newtheorem{remark}[tw]{Remark}
\newtheorem{ex}[tw]{Example}
\def\={\hspace{-3mm}&=&\hspace{-3mm}}
\let\oldmarginpar\marginpar
\renewcommand{\marginpar}[2][rectangle,draw,text width= 1.5cm,rounded corners]{
    \oldmarginpar{
    \scriptsize \tikz \node at (0,0) [#1]{#2};}
    }
\begin{document}

\date{}

\title[Borel regularity, Lusin's theorem, and the existence of Borel representatives]{\bf Borel regularity is equivalent to Lusin's theorem and the existence of Borel representatives}


\author[Alvarado]{Ryan Alvarado}
\address{Amherst College, Amherst, Massachusetts}
\email{rjalvarado\@@amherst.edu}

\author[G\'orka]{Przemys{\l}aw G\'orka}
\address{Warsaw University of Technology, Warsaw, Poland}
\email{przemyslaw.gorka\@@pw.edu.pl}

\author[S{\l}abuszewski]{Artur S{\l}abuszewski}
\address{Warsaw University of Technology, Warsaw, Poland}
\email{artur.slabuszewski\@@pw.edu.pl}




\keywords{Lusin theorem, Borel representative, Borel regular measure, outer regular measure, inner regular measure}
\subjclass[2020]{Primary 28C15, 28A20}

\maketitle
\begin{abstract}
In this article, we characterize both Lusin's theorem and the existence of Borel representatives via the regularity properties of the measure in general topological measure spaces. As a corollary, we prove that Borel regularity of the measure is both a necessary and sufficient condition for these results to hold true in metric measure spaces.
\end{abstract}
\bigskip

%

%
%
\section{Introduction}
Recall that the classical (weak) Lusin theorem (see \cite{L12}) states that every (Lebesgue) measurable function $u:[0,1]\to\mathbb{R}$ is continuous on ``almost all" of its domain, in the sense that, for every $\varepsilon\in(0,\infty)$, there exists a closed set $C_\varepsilon\subseteq[0,1]$ such that $|[0,1]\setminus C_\varepsilon|<\varepsilon$ (where $|\cdot|$ denotes the 1-dimensional Lebesgue measure) and the restriction of $u$ to $C_\varepsilon$ is continuous; see \cite{C27} for  an alternative proof of this result. In fact, by the Tietze extension theorem, the following strong version of Lusin's theorem is valid: for every  measurable function $u:[0,1]\to\mathbb{R}$ and for every $\varepsilon\in(0,\infty)$, there exists a closed set $C_\varepsilon\subseteq[0,1]$ and a continuous function $g:[0,1]\to\mathbb{R}$ such that $|[0,1]\setminus C_\varepsilon|<\varepsilon$ and $g=u$ pointwise in $C_\varepsilon$. As a consequence of Lusin's theorem, every measurable function $u:[0,1]\to\mathbb{R}$ has a Borel representative, that is, there exists a Borel function  $f:[0,1]\to\mathbb{R}$ (called a \textit{Borel representative of $u$}) such that $f=u$ pointwise outside of a null set.
It is well known that these classical results extend to functions defined on more general topological measure spaces, provided that the measure and topological space exhibit a sufficient degree of regularity; see, for example, \cite{Bog,Federer,Feldman,Niizeki,Stone,Wage,Zakon}.  
%
%
The main goal of this article is to revisit the weak and strong Lusin theorems and the existence of Borel representatives in the setting of general topological spaces equipped with a Borel measure (namely, a nonnegative measure  that is defined on $\sigma$-algebra which simply contains all Borel sets) and identify conditions that are both necessary and sufficient for these results to hold true.

%

Turning first to the optimal conditions for Lusin's theorems, in Theorem~\ref{wLusin_equiv}, we prove that the weak Lusin theorem, formulated on a general topological space $X$ equipped with a Borel measure $\mu$, is equivalent to the measure $\mu$ being strongly regular, meaning that for each measurable set $E\subseteq X$ and for each $\varepsilon\in(0,\infty)$, there exist a closed set $C$ and an open set $O$ satisfying $C\subseteq E\subseteq O$ and $\mu(O\setminus C)<\varepsilon$. As we illustrate in Section~\ref{sect:weaklusin}, a wide variety of Borel measures satisfy this  regularity condition. In particular, every Borel regular measure (that is, every Borel measure with the property that each measurable set is contained in a Borel set of equal measure) on a metric space which is finite on balls is strongly regular. In Theorem~\ref{str_Lus_equiv}, we go further to prove that this strong regularity property, together with a weak Tietze extension-type condition, fully characterizes the strong Lusin theorem in the same general setting.

%
Next, while it is possible to obtain the existence of Borel representatives from Lusin's theorem (or, equivalently, from strong regularity of the measure) in more general topological measure spaces, it turns out that such assumptions are unnecessarily strong, in general.
Indeed, for $\sigma$-finite, Borel regular measures on a topological space, every measurable function has a Borel representative; see, for example, \cite[Proposition~3.3.23]{HKST}.\footnote{Every strongly regular measure is Borel regular, but not vice versa; see Corollary~\ref{str_reg_impl_Fsigma} and Remark~\ref{r52}. See also Examples~\ref{ex:wkL-1} and \ref{weakl}, which provide instances of $\sigma$-finite and finite, Borel regular measures that are not strongly regular (and hence, the weak Lusin theorem fails to hold in these settings).} We show in Example~\ref{ex:Brep} that the $\sigma$-finite assumption cannot be omitted, in general, for this result to hold true. Bringing this topic to a natural conclusion, in Theorem~\ref{rep-reg}, we prove that Borel regularity is also necessary for the existence of Borel representatives, even for non-$\sigma$-finite measures.

As a corollary of  our characterizations in Theorems~\ref{wLusin_equiv}, \ref{str_Lus_equiv}, and \ref{rep-reg}, we prove that under certain mild conditions on the topological measure space, Borel regularity fully characterizes both the weak and strong versions of Lusin's theorem and the existence of Borel representatives; see Theorem~\ref{main}. 
It is important to note that the conditions on the topological measure space in Theorem~\ref{main} occur naturally. In fact, by specializing Theorem~\ref{main} to the setting of metric spaces equipped with a Borel measure, we obtain the following characterization. Here, and thereafter, we let $\overline{\mathbb{R}}:=[-\infty,\infty]$.

\begin{tw}\label{main-intro}
Let $(X,d)$ be a metric space and suppose that $\mu$ is a nonnegative measure that is defined on a $\sigma$-algebra which contains all Borel sets in $X$ and has the property that every ball has finite measure. Then, the following statements are equivalent. 
\begin{enumerate}[(a)]
\item The measure $\mu$ is Borel regular, i.e., for every measurable set $E\subseteq X$, there exists a Borel set $B\subseteq X$ such that $E\subseteq B$ and $\mu(B)=\mu(E)$.

\item Each measurable function has a Borel representative, i.e., for every measurable function $u:X\to\overline{\mathbb{R}}$, there exists a Borel set $N\subseteq X$ and a Borel function $f:X\to\overline{\mathbb{R}}$ such that 
$\mu(N)=0$ and $f=u$ pointwise in $X\setminus N$.

\item The weak Lusin theorem holds, i.e., for every measurable function $u:X\to\overline{\mathbb{R}}$ and any $\varepsilon\in(0,\infty)$, there exists a closed set $C_{\varepsilon}\subseteq X$ such that 
$\mu(X\setminus C_{\varepsilon}) < \varepsilon$ and the restriction of $u$ to $C_{\varepsilon}$ is continuous.

\item The strong  Lusin theorem holds, i.e., for every measurable function $u:X\to\overline{\mathbb{R}}$ and any $\varepsilon\in(0,\infty)$, there exists a closed set $C_{\varepsilon}\subseteq X$ and a continuous function $g:X\to\overline{\mathbb{R}}$ such that 
$\mu(X\setminus C_{\varepsilon}) < \varepsilon$ and $g=u$ pointwise in $C_{\varepsilon}$.
\end{enumerate}
\end{tw}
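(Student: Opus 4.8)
The plan is to deduce Theorem~\ref{main-intro} from the three general characterizations already announced, namely Theorem~\ref{wLusin_equiv} (the weak Lusin theorem on a topological space with a Borel measure is equivalent to strong regularity of $\mu$), Theorem~\ref{str_Lus_equiv} (the strong Lusin theorem is equivalent to strong regularity plus a weak Tietze-type extension condition), and Theorem~\ref{rep-reg} (the existence of Borel representatives forces Borel regularity), together with the cited fact that $\sigma$-finite Borel regular measures admit Borel representatives (\cite[Proposition~3.3.23]{HKST}) and the implication ``strongly regular $\Rightarrow$ Borel regular'' from Corollary~\ref{str_reg_impl_Fsigma}. The only genuinely metric-specific input we must supply is: on a metric space $(X,d)$ with a Borel measure that is finite on balls, (i) Borel regularity implies strong regularity, and (ii) the ambient space is ``nice enough'' for the weak Tietze extension condition of Theorem~\ref{str_Lus_equiv} to hold automatically — this is exactly the content of the remark in the introduction that ``every Borel regular measure on a metric space which is finite on balls is strongly regular,'' so I would organize the argument around that implication.

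Concretely, I would prove the cycle of implications $(a)\Rightarrow(d)\Rightarrow(c)\Rightarrow(b)\Rightarrow(a)$, or equivalently establish $(a)\Leftrightarrow(c)$, $(a)\Leftrightarrow(d)$, and $(a)\Leftrightarrow(b)$ separately. For $(a)\Rightarrow(c)$ and $(a)\Rightarrow(d)$: assuming Borel regularity and finiteness on balls, first show $\mu$ is strongly regular. The standard route is to write $X=\bigcup_{k}B(x_0,k)$ as a countable increasing union of balls of finite measure, reduce to a measurable set $E$ of finite measure, pass to a Borel set $B\supseteq E$ with $\mu(B)=\mu(E)$, and then use outer/inner regularity of \emph{finite Borel measures on metric spaces} (which follows from the classical fact that every Borel set in a metric space is, up to the given finite measure, approximable from outside by open sets and from inside by closed sets — proved by the usual ``good sets'' argument, since the closed sets generate the Borel $\sigma$-algebra and the class of well-approximable sets is a $\sigma$-algebra). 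Applying this to $B$ and to $X\setminus E$ (also contained in a Borel set of equal measure, after intersecting with a large ball) yields a closed $C$ and open $O$ with $C\subseteq E\subseteq O$ and $\mu(O\setminus C)<\varepsilon$. Once strong regularity is in hand, Theorem~\ref{wLusin_equiv} gives $(c)$, and for $(d)$ I would check that a metric space satisfies the weak Tietze-type extension hypothesis of Theorem~\ref{str_Lus_equiv} — here one invokes the genuine Tietze extension theorem on the metric space (metric spaces are normal, in fact perfectly normal), so that a continuous $\overline{\mathbb{R}}$-valued function on a closed subset extends continuously to all of $X$; this gives $(d)$.

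For the remaining implications: $(d)\Rightarrow(c)$ is immediate (restricting the extended function $g$ to $C_\varepsilon$ witnesses the weak Lusin property). For $(c)\Rightarrow(b)$, one runs the classical argument producing a Borel representative from the weak Lusin theorem: take $\varepsilon=1/n$, get closed sets $C_n$ with $\mu(X\setminus C_n)<1/n$ and $u|_{C_n}$ continuous, let $C=\bigcup_n C_n$, so $u|_C$ is Borel (a function continuous on each of countably many closed pieces of an $F_\sigma$ set is Borel on the union), set $N'=X\setminus C$, which is a measurable null set; then enlarge $N'$ to a Borel null set $N$ — this last enlargement is exactly where one again uses strong/Borel regularity of $\mu$, so strictly speaking $(c)\Rightarrow(b)$ in this metric setting should be routed through strong regularity (available since $(c)$ already implies it by Theorem~\ref{wLusin_equiv}, or one simply cites that the whole cycle passes through $(a)$). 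Define $f:=u$ on $X\setminus N$ and $f:=0$ on $N$; then $f$ is Borel. Finally $(b)\Rightarrow(a)$ is precisely Theorem~\ref{rep-reg}.

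The main obstacle I anticipate is not any single deep step but the bookkeeping around $\sigma$-finiteness: Theorem~\ref{rep-reg} and the construction of Borel representatives must be handled without assuming $\mu$ is $\sigma$-finite (the ball-finiteness hypothesis gives $\sigma$-finiteness only when $X$ is, e.g., separable or a countable union of balls, which is \emph{not} assumed), so the argument $(c)\Rightarrow(b)$ must produce the Borel null set $N$ purely from strong regularity, applied to the \emph{complement} $X\setminus C$ rather than relying on covering $X$ by finitely-many-measure pieces. Strong regularity is exactly strong enough for this: for each $n$ it supplies a closed set inside $X\setminus C$ of measure within $1/n$ of $\mu(X\setminus C)$ — but since $\mu(X\setminus C)=0$ this is vacuous and instead one applies outer regularity to get an open $O_n\supseteq X\setminus C$ with $\mu(O_n)<1/n$ and takes $N=\bigcap_n O_n$, a Borel (indeed $G_\delta$) null set containing $X\setminus C$. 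The delicate point is verifying that strong regularity genuinely yields such outer approximations for a null set of possibly infinite ``ambient'' measure — which it does, since the definition of strong regularity applies to \emph{every} measurable set $E$ (here $E=X\setminus C$) and delivers an open $O\supseteq E$ with $\mu(O\setminus C')<\varepsilon$ for some closed $C'\subseteq E$; taking $\mu(O)\le\mu(O\setminus C')+\mu(C')<\varepsilon+\mu(E)=\varepsilon$ finishes it. I would make sure each invocation of regularity in the write-up uses only the stated ``for every measurable set'' form, so that no hidden $\sigma$-finiteness sneaks in.
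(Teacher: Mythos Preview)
Your overall strategy is correct and matches the paper's: Theorem~\ref{main-intro} is obtained by specializing the general characterizations (Theorems~\ref{wLusin_equiv}, \ref{str_Lus_equiv}, \ref{rep-reg}) to metric spaces, using that metric spaces are normal and that open sets in a metric space are $F_\sigma$, together with the fact that a Borel regular measure finite on balls is strongly regular.

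However, you contradict yourself about $\sigma$-finiteness, and this confusion causes unnecessary work. Early on you correctly write $X=\bigcup_{k}B(x_0,k)$; later you assert that ``the ball-finiteness hypothesis gives $\sigma$-finiteness only when $X$ is, e.g., separable or a countable union of balls, which is \emph{not} assumed.'' The second claim is simply false: for any point $x_0$ in a (genuine, finite-valued) metric space one has $X=\bigcup_{k\in\mathbb{N}}B(x_0,k)$, so finiteness on balls automatically yields both $\sigma$-finiteness and the open-$\sigma$-finiteness condition~\eqref{QLK-s-A2.bis-2}. This is precisely how the paper checks the hypotheses of Theorem~\ref{main} (and of Corollary~\ref{Borel-in-out-reg-2}). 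Once $\sigma$-finiteness is acknowledged, Theorem~\ref{rep-reg} gives $(a)\Leftrightarrow(b)$ directly, and your hand-built argument for $(c)\Rightarrow(b)$, while correct, becomes superfluous: in the paper's organization one passes $(c)\Rightarrow(a)$ via strong regularity (Theorem~\ref{wLusin_equiv} and Corollary~\ref{str_reg_impl_Fsigma}) and then $(a)\Rightarrow(b)$ via Theorem~\ref{rep-reg}. I would delete the final paragraph of your proposal and replace it with the one-line observation that $X=\bigcup_k B(x_0,k)$ furnishes $\sigma$-finiteness.
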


Although we formulate our main results for functions taking values in $\overline{\mathbb{R}}$, these results could also be equally well formulated for functions taking values in certain general spaces, but we omit such generalizations.

The remainder of this article is organized as follows. In Section~\ref{sect:Borelreg}, we review several regularity properties of measures, including Borel regularity, and establish an equivalence between them in Theorem~\ref{in-out-reg}. 
In Section~\ref{sect:weaklusin}, we introduce the notion of a strongly regular measure and prove in Theorem~\ref{wLusin_equiv} that this condition is equivalent to the validity of the weak Lusin theorem in general topological spaces. Section~\ref{sect:stronglusin} is dedicated to proving Theorem~\ref{str_Lus_equiv}, which identifies optimal conditions for the strong Lusin theorem to hold, and in Section~\ref{sect:BR-BR}, we prove that Borel regularity is both necessary and sufficient for the existence of Borel representatives. 
Finally, in Section~\ref{sect:mainresults} we establish our main characterization  (Theorem~\ref{main}) and provide several examples illustrating the optimality of the assumptions in this result. 
\medskip

\noindent\textbf{Acknowledgments:} Part of this paper was written during PG's visit to Amherst College and PG wishes to thank
the institution for their hospitality.

\section{Borel regularity}
\label{sect:Borelreg}


We begin by recording a few measure-theoretic notions.
Let $(X,\tau)$ be a topological space and suppose that $\mathfrak{M}$ is any $\sigma$-algebra of subsets of $X$. A nonnegative measure $\mu:\mathfrak{M}\to[0,\infty]$ is said to be a \textit{Borel measure on $X$} provided ${\it Borel}_{\tau}(X)\subseteq\mathfrak{M}$, where ${\it Borel}_{\tau}(X)$ denotes the Borel $\sigma$-algebra on $(X,\tau)$, i.e., the smallest $\sigma$-algebra of $X$ containing $\tau$.
The quadruple $(X, \tau,\mathfrak{M},\mu)$ shall be called a \textit{topological measure space} if $(X, \tau)$ is a topological space, $\mathfrak{M}$ is a $\sigma$-algebra of subsets of $X$, and $\mu:\mathfrak{M}\to[0,\infty]$ is a nonnegative Borel measure on $X$. 
Recall that a Borel measure $\mu:\mathfrak{M}\to[0,\infty]$ is said to be
\emph{Borel regular}, if for every measurable set $E\in\mathfrak{M}$, there exists a Borel set $B\in{\it Borel}_{\tau}(X)$ such that $E\subseteq B$ and $\mu(B)=\mu(E)$.
We stress here that to be a Borel measure, we merely
demand that $\mathfrak{M}$ contains ${\textit Borel}_{\tau}(X)$
and not necessarily that 
$\mathfrak{M}={\textit Borel}_{\tau}(X)$. In the latter
case,  the measure $\mu$ would be automatically  Borel regular.

The following theorem is the main result of this section and it extends the work of \cite[Proposition~3.4.15]{MMM23} and \cite{AMS}.

\begin{tw}\label{in-out-reg}
Let $(X, \tau,\mathfrak{M},\mu)$ be a topological measure space with the property that there exists a collection $\{O_j\}_{j=1}^\infty$ of open sets satisfying
\begin{equation}\label{QLK-s-A2.bis-2}
X=\bigcup\limits_{j=1}^\infty O_j
\quad\mbox{and}\quad
\mu(O_j)<\infty\,\,\mbox{ for all }\,\,j\in{\mathbb{N}}.
\end{equation}
Then, the following conditions are equivalent.
\begin{enumerate}[(a)]
\item The measure $\mu$ is Borel regular, and every open set $U\subseteq X$ can be written as $U=F \cup Z$, where $F$ is an  $F_{\sigma}$ set and $Z$ is a null set.
\item The measure $\mu$ is outer regular on $\mathfrak{M}$, i.e., for  every measurable set $E\in\mathfrak{M}$,
\begin{equation}
\label{out-reg}
\mu(E) =\mathop{\inf}_{\substack{O- \text{open},\\ O\supseteq E}} \mu(O).
\end{equation}
\item The measure $\mu$ is inner regular on $\mathfrak{M}$, i.e.,  for  every  measurable set $E\in\mathfrak{M}$,
\begin{equation}
\label{in-reg}
\mu(E) =\mathop{\sup}_{\substack{C- \text{closed},\\ C\subseteq E}} \mu(C).
\end{equation}
\end{enumerate}
\end{tw}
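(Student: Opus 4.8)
The plan is to establish the four implications $(a)\Rightarrow(b)$, $(b)\Rightarrow(a)$, $(a)\Rightarrow(c)$, and $(c)\Rightarrow(a)$, which together give the full equivalence (I will \emph{not} prove $(b)\Leftrightarrow(c)$ directly, as it follows from the rest). Throughout, I would first normalize: replacing $O_j$ by $O_1\cup\cdots\cup O_j$ in \eqref{QLK-s-A2.bis-2}, one may assume the $O_j$ are increasing, so that $\mu(E\cap O_j)\uparrow\mu(E)$ for every $E\in\mathfrak{M}$; this is the workhorse for all the ``infinite measure'' bookkeeping.

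For $(a)\Rightarrow(b)$ and $(a)\Rightarrow(c)$ the core claim is that \emph{every Borel set is both outer regular and approximable from inside by closed sets with small-measure error}; the passage to all of $\mathfrak{M}$ is then routine via Borel regularity. First I would record that the second condition in (a) forces each closed $C$ to have the form $C=G\setminus Z$ with $G$ a $G_\delta$ set and $Z$ null (apply the $F_\sigma$-plus-null decomposition to $X\setminus C$ and complement) — this is the substitute for ``closed sets are $G_\delta$'' available in metric spaces. Fixing $n$ and writing $\nu_n(\cdot):=\mu(\cdot\cap O_n)$, a \emph{finite} Borel measure, one checks that the family $\mathcal{A}_n$ of Borel sets $B$ admitting, for every $\varepsilon>0$, a closed $C$ and open $O$ with $C\subseteq B\subseteq O$ and $\nu_n(O\setminus C)<\varepsilon$ is a $\sigma$-algebra: closure under complements is immediate, and closure under countable unions is a standard $\varepsilon/2^k$ argument using that finite unions of closed sets are closed and that $\nu_n$ is finite (so $\nu_n(\bigcup_k C_k\setminus\bigcup_{k\le N}C_k)\to0$). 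Using the $G_\delta$-minus-null form of closed sets plus continuity from above of $\nu_n$, every closed set lies in $\mathcal{A}_n$, hence $\mathcal{A}_n\supseteq{\it Borel}_\tau(X)$. I then upgrade these ``local'' approximations to $\mu$ itself: for outer regularity of a Borel $B$, take open $P_n\supseteq B$ with $\nu_n(P_n\setminus B)<\varepsilon/2^n$ and form the open set $O:=\bigcup_n(P_n\cap O_n)\supseteq B$, for which $\mu(O\setminus B)\le\sum_n\nu_n(P_n\setminus B)<\varepsilon$; for inner regularity, pick $n$ with $\mu(B\cap O_n)$ close to $\mu(B)$, apply $\mathcal{A}_n$ to the Borel set $B\cap O_n$ to get a closed $C\subseteq B\cap O_n\subseteq B$ with $\mu((B\cap O_n)\setminus C)=\nu_n((B\cap O_n)\setminus C)$ small, and combine with $\mu(B\setminus O_n)$ small. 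Finally, for a general $E\in\mathfrak{M}$, Borel regularity gives a Borel $B\supseteq E$ with $\mu(B)=\mu(E)$ (hence $\mu(B\setminus E)=0$), which yields (b); for (c) I would first establish an \emph{inner} Borel regularity — for every $E\in\mathfrak{M}$ a Borel $\widetilde B\subseteq E$ with $\mu(\widetilde B)=\mu(E)$ — by applying Borel regularity to each $O_n\setminus E$, replacing the resulting Borel superset by its intersection with $O_n$, complementing inside $O_n$, and unioning over $n$; then inner regularity of $\widetilde B$ transfers to $E$.

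For the converse implications I would show that each of (b) and (c) implies both conditions of (a). Borel regularity is the easy part: under (b), intersect countably many outer-regular open supersets of $E$; under (c), for each $j$ approximate $O_j\setminus E$ from inside by an $F_\sigma$ set up to a null set, complement inside $O_j$ to get a Borel $B_j\supseteq O_j\cap E$ with $\mu(B_j\setminus E)=0$, and put $B=\bigcup_j B_j$. The $F_\sigma$-plus-null decomposition of an arbitrary open set $W$ is the delicate point. Under (c) it is immediate: apply inner regularity to each $W\cap O_j$ (finite measure) and assemble the closed sets into an $F_\sigma$ set $F\subseteq W$ with $\mu(W\setminus F)=0$. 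Under (b), the naive idea of complementing an open approximation of $X\setminus W$ fails since $\mu(X\setminus W)$ may be infinite, and localizing via the \emph{open} sets $O_j$ does not directly yield closed sets. The device is: cover $X\setminus W$ by the finite-measure sets $A_j:=O_j\setminus W$, use outer regularity to pick open $V_j\supseteq A_j$ with $\mu(V_j\setminus A_j)<\varepsilon/2^j$ (so $\mu(V_j\cap W)<\varepsilon/2^j$), and observe that $C:=X\setminus\bigcup_j V_j=\bigcap_j(X\setminus V_j)$ is \emph{closed}, is contained in $W$ because $\bigcup_j V_j\supseteq\bigcup_j A_j=X\setminus W$, and satisfies $\mu(W\setminus C)\le\sum_j\mu(V_j\cap W)<\varepsilon$; letting $\varepsilon\to0$ along a sequence yields the required $F_\sigma$ subset of $W$ with null complement.

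I expect the main obstacle to be the $(a)\Rightarrow(b),(c)$ direction: verifying that $\mathcal{A}_n$ is a $\sigma$-algebra containing the closed sets, and — more delicately — the bookkeeping in passing from the finite measures $\nu_n$ back to $\mu$, where one must be careful that the extracted closed sets are genuine subsets of the target set (not merely of $O_n$) and that the infinite-measure case is handled uniformly. On the converse side the only real subtlety is the ``$C=X\setminus\bigcup_j V_j$'' trick in the proof that (b) forces open sets to be $F_\sigma$ modulo null sets, which sidesteps the failure of naive localization.
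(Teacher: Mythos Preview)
Your proof is correct. The core techniques (the $\sigma$-algebra argument for approximability of Borel sets, localization via the $O_j$, complementation tricks) match the paper's, but the organization differs. The paper proves the cycle $(a)\Rightarrow(b)\Rightarrow(c)\Rightarrow(a)$ and handles $(b)\Rightarrow(c)$ by forward reference to Propositions~\ref{inner=outer} and~\ref{out_reg_sigma_fin} on strong regularity; your scheme $(a)\Leftrightarrow(b)$, $(a)\Leftrightarrow(c)$ is entirely self-contained. Your direct $(b)\Rightarrow(a)$ argument for the $F_\sigma$-plus-null decomposition---approximate each $O_j\setminus W$ from outside by open $V_j$ and take the closed set $C=X\setminus\bigcup_j V_j\subseteq W$---is essentially the complementary form of the construction in the paper's Lemma~\ref{Borel-in-out-reg}, deployed so as to avoid the detour through Section~\ref{sect:weaklusin}. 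Your two-sided family $\mathcal{A}_n$ is a variant of the paper's one-sided family $\mathcal{F}$ in Lemma~\ref{finite-in-out-reg}: the paper seeds $\mathcal{F}$ with closed sets and null sets and then uses closure under countable unions to absorb all open sets (as $F_\sigma\cup\text{null}$), whereas you first dualize to get closed${}={}G_\delta\setminus\text{null}$ and then invoke continuity from above of the finite measure $\nu_n$. Finally, your ``inner Borel regularity'' step (a Borel $\widetilde B\subseteq E$ with $\mu(\widetilde B)=\mu(E)$) in $(a)\Rightarrow(c)$ is a clean device the paper never isolates, since its route through (b) sidesteps the need for it.
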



\begin{remark}
\label{OR-BR}
It is easy to see that the outer regularity condition \eqref{out-reg} implies that the measure is Borel regular, even without the assumptions in \eqref{QLK-s-A2.bis-2}.
\end{remark}

\begin{remark} \label{inout}
In general, there is no relationship between the outer and inner  regularity conditions \eqref{out-reg} and \eqref{in-reg}. Indeed, from Example \ref{ex:wkL-1}, we have a topological measure space $(X,\tau,\mathfrak{M},\mu)$ such that the measure $\mu$ is inner regular but $\mu$ is not outer regular. On other hand, Example \ref{ex:Brep} gives us a topological measure space $(X,\tau,\mathfrak{M},\mu)$ such that the measure $\mu$ is outer regular but $\mu$ is not inner regular.
\end{remark}

To prove Theorem~\ref{in-out-reg}, we first prove  two lemmas. The first lemma is a slight generalization of \cite[Lemma~3.11]{AM15} (see also  \cite[Lemma~1.1]{EG15} for the case when $X=\mathbb{R}^n$ and \cite[Theorem~2.2.2]{Federer} for the case when $X$ is a metric space).

\begin{lem}
\label{finite-in-out-reg}
Let $(X, \tau,\mathfrak{M},\mu)$ be a topological measure space such that every open set $U\subseteq X$ can be written as $U=F \cup Z$, where $F$ is an  $F_{\sigma}$ set and $Z$ is a null set. Then, for every $B\in{\it Borel}_{\tau}(X)$ with $\mu(B)<\infty$ and every $\varepsilon\in(0,\infty)$, there exists a closed set $C_\varepsilon\subseteq X$ such that $C_\varepsilon\subseteq B$ and $\mu(B\setminus C_\varepsilon)<\varepsilon$.
\end{lem}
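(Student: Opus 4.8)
The plan is to prove Lemma~\ref{finite-in-out-reg} by a standard "good sets" argument: let $\mathcal{G}$ denote the collection of all $B \in {\it Borel}_\tau(X)$ such that for every $\varepsilon \in (0,\infty)$ there is a closed set $C_\varepsilon \subseteq B$ with $\mu(B \setminus C_\varepsilon) < \varepsilon$, and show that $\mathcal{G}$ contains all Borel sets of finite measure. Since every Borel set arises from open (or closed) sets through countable unions, countable intersections, and complements, the strategy is to verify that $\mathcal{G}$ has enough closure properties to contain ${\it Borel}_\tau(X)$, at least after restricting attention to sets of finite measure.

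First I would handle the generating sets. A closed set $C$ trivially lies in $\mathcal{G}$ (take $C_\varepsilon = C$). For an open set $U$, I invoke the hypothesis to write $U = F \cup Z$ with $F = \bigcup_{k} F_k$ an $F_\sigma$ set ($F_k$ closed, and we may assume $F_k \nearrow$) and $Z$ null; then $U \supseteq F_k$, and since $\mu(U) < \infty$ we have $\mu(F_k) \nearrow \mu(F) = \mu(U)$ (the last equality because $\mu(Z)=0$ and $\mu(U \setminus F) \le \mu(Z) = 0$), so for large $k$, $\mu(U \setminus F_k) = \mu(U) - \mu(F_k) < \varepsilon$, placing $U$ in $\mathcal{G}$. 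Next, I would show that $\mathcal{G}$, intersected with the finite-measure sets, is closed under countable intersections and countable unions: for a countable union $B = \bigcup_n B_n$ with $\mu(B) < \infty$, approximate each $B_n$ from inside by a closed set $C_n$ with $\mu(B_n \setminus C_n) < \varepsilon 2^{-n-1}$; then $\bigcup_{n=1}^N C_n$ is closed and, using $\mu(B \setminus \bigcup_n C_n) \le \sum_n \mu(B_n \setminus C_n) < \varepsilon/2$ together with continuity from below (finiteness of $\mu(B)$) to pass to a finite $N$, we get the desired inner approximation. Countable intersections are easier and do not even need the finiteness of the total measure at the approximation stage.

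The genuine obstacle is complementation: the finite-measure Borel sets do not form a $\sigma$-algebra, and approximating $B$ from inside by a closed set is the same as approximating $X \setminus B$ from outside by an open set, which is a different (dual) property. The clean way around this is to run the good-sets argument at the level of the \emph{algebra} generated by open sets of finite measure, but carry \emph{both} a closed inner approximation and an open outer approximation simultaneously — i.e., define $\mathcal{G}$ to be the class of $B$ such that for every $\varepsilon$ there exist a closed $C$ and an open $O$ with $C \subseteq B \subseteq O$ and $\mu(O \setminus C) < \varepsilon$ (restricted to the ambient finite-measure localizing set $O_j$, or to a fixed finite-measure open superset of $B$, so that $O \setminus C$ has finite measure and subtraction is legitimate). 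This symmetric class \emph{is} closed under complements by construction, under finite intersections and unions by the estimate above, and one upgrades to countable unions/intersections using continuity from below/above of $\mu$ on finite-measure sets. It contains the open and closed sets of finite measure by the first paragraph, hence contains the generated $\sigma$-algebra; intersecting with the hypothesis that $X = \bigcup_j O_j$ with $\mu(O_j) < \infty$ lets one transfer this to an arbitrary Borel $B$ with $\mu(B) < \infty$ by first replacing $B$ with $B \cap O_j$ for large $j$ (so that $\mu(B \setminus (B \cap O_j)) < \varepsilon/2$) and then approximating $B \cap O_j$ from inside. Reading off only the inner (closed) approximation from this symmetric statement yields exactly the conclusion of Lemma~\ref{finite-in-out-reg}.

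A remark on bookkeeping: throughout, whenever I write $\mu(A \setminus B) = \mu(A) - \mu(B)$ I will be in a situation where $B \subseteq A$ and $\mu(A) < \infty$, so this is valid; and whenever I invoke continuity from below to replace an infinite union $\bigcup_n C_n$ by a finite subunion $\bigcup_{n \le N} C_n$, it is because the relevant enclosing set has finite measure. These are the only places finiteness is used, and the hypothesis \eqref{QLK-s-A2.bis-2}-type localization via the $O_j$ is not needed for this lemma itself (the lemma assumes $\mu(B) < \infty$ directly) — it will be needed only when this lemma is fed into the proof of Theorem~\ref{in-out-reg}.
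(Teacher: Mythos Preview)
Your overall architecture (a good-sets argument) matches the paper's, but there is a genuine gap in the way you handle complementation. You correctly identify that the asymmetric class $\mathcal{G}$ is not closed under complements, and you switch to the symmetric class $\{B:\ \exists\, C\subseteq B\subseteq O,\ \mu(O\setminus C)<\varepsilon\}$. But to show this symmetric class contains the open sets (and hence generates the Borel $\sigma$-algebra), you need finiteness somewhere, and you obtain it either from the cover $\{O_j\}$ in \eqref{QLK-s-A2.bis-2} or from ``a fixed finite-measure open superset of $B$.'' Neither is available: the lemma does \emph{not} assume \eqref{QLK-s-A2.bis-2} (your own final paragraph notes this, contradicting the argument you sketched), and a finite-measure open superset of $B$ need not exist. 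Concretely, in Example~\ref{ex:wkL-1} the set $B=[0,1]\setminus\mathbb{Q}$ is Borel with $\mu(B)=1<\infty$, yet every nonempty open set has infinite $\mu$-measure; the lemma still applies to this $B$, but your localization step cannot be carried out.

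The paper's proof closes this gap with a simple device you are missing: it fixes the target set $B$ with $\mu(B)<\infty$ \emph{first}, and then runs the good-sets argument with respect to the finite measure $\nu(\,\cdot\,):=\mu(B\cap\,\cdot\,)$, defining
\[
\mathcal{F}:=\bigl\{A\in\mathfrak{M}:\ \forall\varepsilon>0\ \exists\, C\ \text{closed},\ C\subseteq A,\ \mu\bigl(B\cap(A\setminus C)\bigr)<\varepsilon\bigr\}.
\]
Because $\nu(X)=\mu(B)<\infty$, the continuity arguments for countable unions and intersections go through for \emph{all} $A$, regardless of $\mu(A)$; in particular every open set (via $U=F\cup Z$) and every closed set lies in $\mathcal{F}$ without any restriction. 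One then passes to $\widetilde{\mathcal{F}}:=\{A\in\mathcal{F}:X\setminus A\in\mathcal{F}\}$, which is a $\sigma$-algebra containing $\tau$ and hence ${\it Borel}_\tau(X)$. Reading off $B\in\mathcal{F}$ gives exactly the desired closed $C_\varepsilon\subseteq B$ with $\mu(B\setminus C_\varepsilon)=\mu(B\cap(B\setminus C_\varepsilon))<\varepsilon$. The point is that the finiteness you were looking for is supplied by $B$ itself, not by an ambient open set.
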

\begin{proof}
Let $B\in{\it Borel}_{\tau}(X)$ with $\mu(B)<\infty$. We define the set
\[
	\mathcal{F}:=\big\{A \in \mathfrak{M}: \forall\varepsilon\in(0,\infty), \, \exists C\subseteq X \, \text{such that $C$ is closed in $\tau$, } \, C\subseteq A, \, \text{ and }\, \mu\big(B\cap(A\setminus C)\big) <\varepsilon \big\}.
\]
Let us note that all closed sets and null sets belong to $\mathcal{F}$. Moreover, by arguing as in \cite[Lemma~3.11]{AM15}, we have that if $\{A_i\}_{i=1}^{\infty} \subseteq \mathcal{F}$, then the sets $\bigcup_{i=1}^{\infty}A_i, \bigcap_{i=1}^{\infty}A_i \in \mathcal{F}$. Therefore, by assumption, all open sets in $X$ belong to $\mathcal{F}$. Granted this, it is straightforward to check that the set
$\widetilde{\mathcal{F}}:=\big\{A\in\mathcal{F}:X\setminus A \in \mathcal{F}\big\}$
is a $\sigma$-algebra containing all open sets of $X$, and therefore $\widetilde{\mathcal{F}}$ contains $Borel_{\tau}(X)$. Thus, in particular, the Borel set $B$ belongs to $\mathcal{F}$ and the desired claim now follows. This finishes the proof of Lemma~\ref{finite-in-out-reg}.
\end{proof}

The next lemma extends part of \cite[Proposition~3.4.15]{MMM23}.
\begin{lem}
\label{Borel-in-out-reg}
Let $(X, \tau,\mathfrak{M},\mu)$ be a topological measure space with the property that there exists a collection $\{O_j\}_{j=1}^\infty$ of open sets  satisfying \eqref{QLK-s-A2.bis-2}.
%
%
Then, the following conditions are equivalent.
\begin{enumerate}[i)]
\item Every open set $U\subseteq X$ can be written as $U=F \cup Z$, where $F$ is an  $F_{\sigma}$ set and $Z$ is a null set.

\item For every $B\in{\it Borel}_{\tau}(X)$ and every $\varepsilon\in(0,\infty)$, there exists an open set $U_\varepsilon\subseteq X$ such that $U_\varepsilon\supseteq B$ and $\mu(U_\varepsilon\setminus B)<\varepsilon$.

\item For  every $B\in{\it Borel}_{\tau}(X)$ and every $\varepsilon\in(0,\infty)$, there exists a closed set $C_\varepsilon\subseteq X$ such that $C_\varepsilon\subseteq B$ and $\mu(B\setminus C_\varepsilon)<\varepsilon$.
\end{enumerate}
\end{lem}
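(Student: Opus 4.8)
The plan is to prove the cycle of implications i) $\Rightarrow$ ii) $\Rightarrow$ iii) $\Rightarrow$ i). Note that i) $\Rightarrow$ iii) is already essentially Lemma~\ref{finite-in-out-reg} when $\mu(B)<\infty$, so the role of hypothesis \eqref{QLK-s-A2.bis-2} will be to bootstrap from finite-measure Borel sets to arbitrary Borel sets. Throughout, I will exhaust $X$ by the open sets $O_j$ with $\mu(O_j)<\infty$; replacing $O_j$ by $O_1\cup\dots\cup O_j$, I may assume the sequence is increasing.

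\medskip

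\noindent\textbf{i) $\Rightarrow$ ii).} Fix $B\in{\it Borel}_\tau(X)$ and $\varepsilon\in(0,\infty)$. For each $j$, the set $B\cap O_j$ is Borel with $\mu(B\cap O_j)\le\mu(O_j)<\infty$. Applying Lemma~\ref{finite-in-out-reg} to the Borel set $O_j\setminus(B\cap O_j)=O_j\setminus B$ (which also has finite measure) and tolerance $\varepsilon 2^{-j}$, I obtain a closed set $C_j\subseteq O_j\setminus B$ with $\mu\bigl((O_j\setminus B)\setminus C_j\bigr)<\varepsilon 2^{-j}$. Set $U_j:=O_j\setminus C_j$, which is open, contains $B\cap O_j$, and satisfies $\mu(U_j\setminus B)=\mu\bigl((O_j\setminus B)\setminus C_j\bigr)<\varepsilon 2^{-j}$. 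Then $U:=\bigcup_{j=1}^\infty U_j$ is open, $U\supseteq\bigcup_j(B\cap O_j)=B$, and $\mu(U\setminus B)\le\sum_{j=1}^\infty\mu(U_j\setminus B)<\varepsilon$. This proves ii).

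\medskip

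\noindent\textbf{ii) $\Rightarrow$ iii).} Fix $B\in{\it Borel}_\tau(X)$ and $\varepsilon\in(0,\infty)$. For each $j$, apply ii) to the Borel set $O_j\setminus B$ with tolerance $\varepsilon 2^{-j}$ to get an open set $U_j\supseteq O_j\setminus B$ with $\mu\bigl(U_j\setminus(O_j\setminus B)\bigr)<\varepsilon 2^{-j}$. Put $C_j:=O_j\setminus U_j$; this set is relatively closed in $O_j$, but I want a set that is closed in $X$. To fix this, note that $C_j\subseteq O_j\cap B$ and $O_j\setminus C_j=O_j\cap U_j$, so $\mu\bigl((O_j\cap B)\setminus C_j\bigr)=\mu\bigl(O_j\cap B\cap U_j\bigr)\le\mu\bigl(U_j\setminus(O_j\setminus B)\bigr)<\varepsilon 2^{-j}$. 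Since $\mu(O_j\cap B)<\infty$, this shows that $O_j\cap B$ can be approximated from inside by sets of the form $O_j\setminus U_j$. Now I iterate: apply the same reasoning once more, this time approximating the open set $U_j$ itself from inside. Concretely, by ii) applied to $O_k\setminus C_j$ one produces genuinely closed inner approximants; I will instead argue directly that the collection
\[
\mathcal{G}:=\Bigl\{A\in\mathfrak{M}:\forall\varepsilon>0\ \exists\,C\subseteq A\ \text{closed in }\tau\ \text{with}\ \mu\bigl((A\cap O_j)\setminus C\bigr)<\varepsilon\ \forall j\Bigr\}
\]
contains all closed sets and all open sets (an open $U$ is handled by the displayed estimate, taking $C=O_j\setminus U_j$ and then taking the union over $j$ of such closed sets, which is closed once one arranges $C_j\subseteq C_{j+1}$) and is a $\sigma$-algebra; hence $\mathcal{G}\supseteq{\it Borel}_\tau(X)$. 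Then for $B\in\mathcal{G}$ choose closed $C\subseteq B$ with $\mu\bigl((B\cap O_j)\setminus C\bigr)<\varepsilon 2^{-j}$ for all $j$; since $B=\bigcup_j(B\cap O_j)$, monotone convergence gives $\mu(B\setminus C)=\lim_j\mu\bigl((B\cap O_j)\setminus C\bigr)\le\varepsilon$ — wait, more carefully, $\mu(B\setminus C)=\sup_j\mu\bigl((B\cap O_j)\setminus C\bigr)$ only if the $O_j$ increase, which we have arranged, and this sup is $\le\varepsilon$. This proves iii).

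\medskip

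\noindent\textbf{iii) $\Rightarrow$ i).} Let $U\subseteq X$ be open. For each $j$, apply iii) to the Borel set $U\cap O_j$ (of finite measure) with tolerance $2^{-k}$ for $k\in\mathbb{N}$, obtaining closed sets $C_{j,k}\subseteq U\cap O_j$ with $\mu\bigl((U\cap O_j)\setminus C_{j,k}\bigr)<2^{-k}$. Set $F_j:=\bigcup_{k=1}^\infty C_{j,k}$, an $F_\sigma$ subset of $U\cap O_j$ with $\mu\bigl((U\cap O_j)\setminus F_j\bigr)=0$, and then $F:=\bigcup_{j=1}^\infty F_j$, which is an $F_\sigma$ set contained in $U$. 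Since $U=\bigcup_j(U\cap O_j)$, the difference $Z:=U\setminus F$ satisfies $Z\subseteq\bigcup_j\bigl((U\cap O_j)\setminus F_j\bigr)$, hence $\mu(Z)=0$ by countable subadditivity. Thus $U=F\cup Z$ with $F$ an $F_\sigma$ set and $Z$ null, which is i).

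\medskip

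\noindent\textbf{Main obstacle.} The genuinely delicate point is the one flagged in the ii) $\Rightarrow$ iii) step: the natural inner approximation $O_j\setminus U_j$ of $B\cap O_j$ is only \emph{relatively} closed in $O_j$, not closed in $X$, so one cannot simply take complements of open sets and be done. The fix is to run the $\sigma$-algebra / monotone-class argument of Lemma~\ref{finite-in-out-reg} but with the two-sided bookkeeping built in, using increasingly fine approximations so that the leftover open "collar" around $C_j$ can itself be shaved down at the next stage; equivalently, one shows directly that the class $\mathcal{G}$ above is a $\sigma$-algebra containing the open sets, the only subtle verifications being closure under countable intersections and the fact that a countable \emph{increasing} union of closed sets, after the usual $\varepsilon 2^{-j}$ truncation, can be replaced by a single closed set. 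Arranging the $O_j$ to be increasing and summing a geometric series makes all the error terms combine correctly; once that is set up, everything else is routine.
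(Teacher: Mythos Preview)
Your i) $\Rightarrow$ ii) is correct and coincides with the paper's argument. Your iii) $\Rightarrow$ i) is also correct, though unnecessarily localized: since iii) already applies to \emph{arbitrary} Borel sets, the paper simply applies it to $U$ itself with tolerance $1/n$, takes $F:=\bigcup_n C_n$, and is done---no need to pass through $U\cap O_j$.

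The genuine problem is ii) $\Rightarrow$ iii). You have manufactured the ``relatively closed'' obstacle by localizing to $O_j$, and then spend the rest of the step trying to repair it with an incompletely specified $\sigma$-algebra argument. But ii) holds for \emph{every} Borel set, in particular for $X\setminus B$. Apply it once: there is an open $U_\varepsilon\supseteq X\setminus B$ with $\mu\bigl(U_\varepsilon\setminus(X\setminus B)\bigr)<\varepsilon$. Then $C_\varepsilon:=X\setminus U_\varepsilon$ is closed in $X$, contained in $B$, and
\[
\mu(B\setminus C_\varepsilon)=\mu(B\cap U_\varepsilon)=\mu\bigl(U_\varepsilon\setminus(X\setminus B)\bigr)<\varepsilon.
\]
That is the paper's entire proof of this implication.

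Your alternative route is not just longer but contains real gaps. The parenthetical claim that a union of closed sets ``is closed once one arranges $C_j\subseteq C_{j+1}$'' is false (e.g.\ $\bigcup_n[1/n,1]=(0,1]$ in $\mathbb{R}$). The assertion that $\mathcal{G}$ is a $\sigma$-algebra containing the open sets is never verified; in particular, your candidate inner approximant for an open set is $O_j\setminus U_j$, which is precisely the set you already noted is \emph{not} closed in $X$. Finally, the last display conflates the uniform-in-$j$ bound $<\varepsilon$ from the definition of $\mathcal{G}$ with a $j$-dependent bound $<\varepsilon\,2^{-j}$ that $\mathcal{G}$ does not furnish for a single $C$. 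All of this evaporates once you take complements globally instead of inside $O_j$.
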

\begin{proof}
We first prove that  \textit{i)} implies \textit{ii)}. 
Suppose \textit{i)} holds, and fix a Borel set $B\in{\it Borel}_{\tau}(X)$ along with a threshold $\varepsilon\in(0,\infty)$
Observe that each $O_j\setminus B\in Borel_{\tau}(X)$ with $\mu(O_j\setminus B)<\infty$. As such, for each $j\in\mathbb{N}$, we can appeal to Lemma~\ref{finite-in-out-reg} to get a closed set $C_j\subseteq O_j\setminus B$ satisfying
%
$\mu\big((O_j\setminus B)\setminus C_j\big)<\varepsilon/2^j.$
%
Consider the set
$$
U_\varepsilon:=\bigcup_{j=1}^\infty O_j\setminus C_j.
$$
Note that $U_\varepsilon\subseteq X$ is an open set and, since $C_j\subseteq X\setminus B$, we have that $O_j\cap B\subseteq O_j\setminus C_j$. Therefore,
$$
B=\bigcup_{j=1}^\infty(O_j\cap B)
\subseteq\bigcup_{j=1}^\infty
O_j\setminus C_j=U_\varepsilon
$$
and
\begin{align*}
\mu(U_\varepsilon\setminus B)&=
\mu\bigg(\bigcup_{j=1}^\infty(O_j\setminus C_j)\setminus B\bigg)
\leq
\sum_{j=1}^\infty\mu\big((O_j\setminus C_j)\setminus B\big)
\nonumber\\[6pt]
&=\sum_{j=1}^\infty
\mu\big((O_j\setminus B)\setminus C_j\big)
<\sum_{j=1}^\infty\varepsilon/2^j=\varepsilon.
\end{align*}
This completes the proof of \textit{i)} implies \textit{ii)}.

Next, let us suppose that $ii)$ holds. To prove \textit{iii)}, we let $B\in{\it Borel}_{\tau}(X)$ and $\varepsilon\in(0,\infty)$. From $ii)$, there exists an open set $U_\varepsilon\subseteq X$ such that $X\setminus B\subseteq U_\varepsilon$ and $\mu\big(U_\varepsilon\setminus (X\setminus B)\big)<\varepsilon$. Then $C_{\varepsilon}:= X\setminus U_\varepsilon$ is a closed set that is contained in $B$ and satisfies
\[
	\mu(B\setminus C_{\varepsilon}) = \mu(B \cap U_\varepsilon) = \mu\big(U_\varepsilon \setminus (X \setminus B) \big)< \varepsilon.
\] 
This proves statement \textit{iii)}.

Finally, we prove that \textit{iii)} implies \textit{i)}. Assume \textit{iii)} holds, and let $U\subseteq X$ be an open set. By $iii)$, for every $n\in\mathbb{N}$ there exists a closed set $C_n\subseteq X$ such that $C_n \subseteq U$ and $\mu(U \setminus C_n) < 1/n$. Let $C:=\bigcup_{n=1}^{\infty} C_n$. Then $C$ is an $F_{\sigma}$ set and we can write $U=C\cup(U\setminus C)$, where $\mu(U\setminus C)=0$. 
%
Thus $i)$ holds. This finishes the proof of the implication \textit{iii)} $\Rightarrow$ \textit{i)} and hence, the proof of Lemma~\ref{Borel-in-out-reg}.
\end{proof}

We are now ready to prove Theorem~\ref{in-out-reg}.
\begin{proof}[Proof of Theorem~\ref{in-out-reg}]
We begin proving that \textit{(a)} implies \textit{(b)}.
Let us suppose that \textit{(a)} holds, and let us fix $E \in \mathfrak{M}$. If $\mu(E)=\infty$ then \eqref{out-reg} trivially holds (by taking $O=X$) and so we can assume $\mu(E)<\infty$. With this in mind, let $\varepsilon\in(0,\infty)$. Since $\mu$ is Borel regular, there exists $B\in{\it Borel}_{\tau}(X)$ such that $E\subseteq B$ and $\mu(E)=\mu(B)$. By Lemma~\ref{Borel-in-out-reg}, there exists an open set $O\subseteq X$ such that $B\subseteq O$ and $\mu(O\setminus B) < \varepsilon$. By this, and the fact that $\mu(E)=\mu(B)<\infty$, we have
$\mu(O\setminus E) =\mu(O\setminus B)< \varepsilon.$
Given that $\varepsilon\in(0,\infty)$ was arbitrary, \eqref{out-reg} follows and hence, the implication \textit{(a)} $\Rightarrow$  \textit{(b)} holds.

Next, we prove that \textit{(b)} implies \textit{(c)}. Let us suppose that  \textit{(b)} holds.  
As a consequence of \eqref{QLK-s-A2.bis-2} and the outer regularity condition \eqref{out-reg}, for every measurable set $E\in \mathfrak{M}$, we have
\begin{equation}
\label{ej-4}
\inf_{\substack{C- \text{\textit{closed}},\\ C\subseteq E}} \mu(E \setminus C) =0.
\end{equation}
Indeed, \eqref{ej-4} follows from a more general set of results that we prove in Section~\ref{sect:weaklusin}, namely, Propositions~\ref{out_reg_sigma_fin} and \ref{inner=outer}. Noting that \eqref{ej-4} implies condition \eqref{in-reg} completes the proof of \textit{(b)} implies \textit{(c)}.

We will now show that \textit{(c)} implies \textit{(a)}. Let us suppose that  \textit{(c)} holds.  We fix any measurable set $E\in \mathfrak{M}$. By \eqref{QLK-s-A2.bis-2} for each $j\in \mathbb{N}$, we have $E\cap O_j\in\mathfrak{M}$ and $\mu(E\cap O_j)<\infty$. As such, by using  \eqref{in-reg} and arguing as in the proof of \textit{iii)} $\Rightarrow$ \textit{i)} in Lemma~\ref{Borel-in-out-reg}, for each $j\in \mathbb{N}$, we can find an  $F_{\sigma}$ set $F_{j}$ and a null set $Z_{j}$, such that 
$E\cap O_j = F_{j}\cup Z_j.$
Observe that the $F_{\sigma}$ set $F := \bigcup_{j=1}^{\infty} F_{j}$, and the null set $Z := \bigcup_{j=1}^{\infty} Z_{j}$ satisfy
\begin{align}\label{E=FuZ}
E = \bigcup_{j=1}^{\infty} E\cap O_j = \bigcup_{j=1}^{\infty} F_{j}\cup Z_j = \bigcup_{j=1}^{\infty} F_{j}\cup \bigcup_{j=1}^{\infty} Z_j = F \cup Z.
\end{align}
In particular, we have proved that  every open set $U\subseteq X$ can be written as $U=F \cup Z$, where $F$ is an  $F_{\sigma}$ set and $Z$ is a null set. 
Finally, we show that $\mu$ is Borel regular. For this purpose, we fix $E\in \mathfrak{M}$ and apply \eqref{E=FuZ} to the set $X\setminus E$ to obtain an  $F_{\sigma}$ set $F$ and a null set $Z$, such that $X\setminus E = F\cup Z$. Therefore, $E = (X\setminus F)\cap (X\setminus Z) \subseteq X\setminus F$, where $X\setminus F\in Borel_{\tau}(X)$.
Now, since  $(X\setminus F)\setminus E = (X\setminus E)\setminus F \subseteq Z$,
we obtain
$\mu((X\setminus F)\setminus E ) = 0$,
which implies $\mu(X\setminus F)=\mu(E)$. Thus, $\mu$ is Borel regular and this finishes the proof of \textit{(c)} $\Rightarrow$ \textit{(a)} and hence, the proof of Theorem~\ref{in-out-reg}.
\end{proof}

\section{Strongly regular measures and the weak Lusin theorem}
\label{sect:weaklusin}
The goal of this section is to identify optimal assumptions for the weak Lusin theorem to hold in general topological spaces (see Theorem~\ref{wLusin_equiv}). For this purpose, we introduce the notion of strongly regular measure.
\begin{defi}
\label{str-in}
Let $(X,\tau,\mathfrak{M},\mu)$ be a topological measure space. The measure $\mu$ is said to be  \emph{strongly regular} if, for every measurable set $E \in \mathfrak{M}$,
\begin{align*}
\inf_{\substack{C-\text{closed},\\O- \text{open},\\ C\subseteq E\subseteq O}} \mu(O \setminus C) =0.
\end{align*}
\end{defi}

\begin{prop} \label{inner=outer}
Let $(X,\tau,\mathfrak{M},\mu)$ be a topological measure space. 
Then, the following conditions are equivalent.
\begin{enumerate}
\item The measure $\mu$ is strongly regular.
\item For every measurable set $E \in \mathfrak{M}$,
\begin{align*}
\inf_{\substack{O- \text{open},\\ E\subseteq O}} \mu(O \setminus E) =0.
\end{align*}
\item For every measurable set $E \in \mathfrak{M}$,
\begin{align*}
\inf_{\substack{C- \text{closed},\\ C\subseteq E}} \mu(E \setminus C) =0.
\end{align*} 
\end{enumerate}
\end{prop}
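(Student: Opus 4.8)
The plan is to prove the cyclic chain of implications $(1)\Rightarrow(2)\Rightarrow(3)\Rightarrow(1)$, since the hardest direction is $(3)\Rightarrow(1)$, and the two easy directions are essentially unwindings of definitions. Throughout, recall that for a measurable set $E$ and any open $O\supseteq E$ and closed $C\subseteq E$ we have the partition $O\setminus C = (O\setminus E)\cup(E\setminus C)$, so $\mu(O\setminus C)=\mu(O\setminus E)+\mu(E\setminus C)$; this identity is the bridge between the three conditions.

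For $(1)\Rightarrow(2)$: given $E\in\mathfrak{M}$ and $\varepsilon>0$, strong regularity produces a closed $C$ and open $O$ with $C\subseteq E\subseteq O$ and $\mu(O\setminus C)<\varepsilon$. Since $O\setminus E\subseteq O\setminus C$, we get $\mu(O\setminus E)<\varepsilon$, and taking the infimum over $O$ gives $(2)$. For $(2)\Rightarrow(3)$: apply $(2)$ to the measurable set $X\setminus E$ to obtain an open $U\supseteq X\setminus E$ with $\mu(U\setminus(X\setminus E))<\varepsilon$; then $C:=X\setminus U$ is closed, $C\subseteq E$, and $\mu(E\setminus C)=\mu(E\cap U)=\mu(U\setminus(X\setminus E))<\varepsilon$, so taking the infimum over closed $C$ gives $(3)$ — this is literally the argument already used in the proof of Lemma~\ref{Borel-in-out-reg} for the implication $ii)\Rightarrow iii)$.

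The main obstacle is $(3)\Rightarrow(1)$, because $(3)$ only gives inner approximation and we must also produce the outer open set $O$, and the subtlety is that $E$ need not have finite measure, so we cannot simply complement. The plan is: fix $E\in\mathfrak{M}$ and $\varepsilon>0$. Apply $(3)$ to $E$ to get a closed $C\subseteq E$ with $\mu(E\setminus C)<\varepsilon/2$. Separately, apply $(3)$ to the measurable set $X\setminus E$ to get a closed set $C'\subseteq X\setminus E$ with $\mu((X\setminus E)\setminus C')<\varepsilon/2$; then $O:=X\setminus C'$ is open, $O\supseteq E$, and $\mu(O\setminus E)=\mu((X\setminus E)\setminus C')<\varepsilon/2$. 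Hence $\mu(O\setminus C)=\mu(O\setminus E)+\mu(E\setminus C)<\varepsilon$ with $C\subseteq E\subseteq O$, $C$ closed, $O$ open, which is exactly strong regularity. So in fact condition $(3)$, applied once to $E$ and once to its complement, delivers $(1)$ directly, and no finiteness hypothesis is needed anywhere in the argument — the identity $\mu(O\setminus C)=\mu(O\setminus E)+\mu(E\setminus C)$ does all the work. I would present the proof in exactly this order, noting that the $(2)\Rightarrow(3)$ step and the complementation trick in $(3)\Rightarrow(1)$ are the same device seen in Lemma~\ref{Borel-in-out-reg}.
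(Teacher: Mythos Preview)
Your proof is correct and is essentially the same as the paper's: the paper proves $(2)\Leftrightarrow(3)$ by the complementation identity $(X\setminus E)\setminus C = (X\setminus C)\setminus E$, notes that $(1)\Rightarrow(2)$ is clear, and then observes that $(2)$ together with $(3)$ gives $(1)$ --- which is exactly your partition $\mu(O\setminus C)=\mu(O\setminus E)+\mu(E\setminus C)$ applied once to $E$ and once to its complement. The only difference is organizational (you run a cycle $(1)\Rightarrow(2)\Rightarrow(3)\Rightarrow(1)$, whereas the paper establishes $(2)\Leftrightarrow(3)$ first), and your framing of $(3)\Rightarrow(1)$ as ``the main obstacle'' is a bit overstated, since --- as you yourself discover --- no additional idea beyond the complementation trick is needed.
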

\begin{proof}
The equivalence between \textit{(2)} and \textit{(3)}  follows from the identity $(X\setminus E) \setminus C = (X\setminus C) \setminus E$ and it is clear that \textit{(1)}  implies  \textit{(2)}. On the other hand, it is easy to see that \textit{(1)}  follows from \textit{(2)}  combined with \textit{(3)}. This completes the proof of Proposition~\ref{inner=outer}.
\end{proof}

We now collect a few basic properties and observations regarding strongly regular measures, including its relationship with the other measure regularity conditions appearing in Section~\ref{sect:Borelreg}. To begin, we have the following immediate consequence of Proposition~\ref{inner=outer} and Remark~\ref{OR-BR}.
\begin{cor}\label{str_reg_impl_Fsigma}
Let $(X,\tau,\mathfrak{M},\mu)$ be a topological measure space.
If $\mu$ is strongly regular, then every measurable set $E\in\mathfrak{M}$ can be written as $E= F\cup Z$ and $E = G\setminus N$, where $F$ is an $F_{\sigma}$ set, $G$ is a $G_{\delta}$ set, and $Z,N$ are null sets. Moreover, $\mu$ is both inner regular and outer regular on $\mathfrak{M}$. In particular, $\mu$ is Borel regular.
\end{cor}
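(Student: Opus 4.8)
The plan is to derive Corollary~\ref{str_reg_impl_Fsigma} directly from Proposition~\ref{inner=outer} and Remark~\ref{OR-BR}, with no need for the hypotheses of Theorem~\ref{in-out-reg}. First I would fix a measurable set $E\in\mathfrak{M}$ and use the strong regularity of $\mu$ together with the equivalence in Proposition~\ref{inner=outer}(3): for each $n\in\mathbb{N}$ choose a closed set $C_n\subseteq E$ with $\mu(E\setminus C_n)<1/n$. Then $F:=\bigcup_{n=1}^\infty C_n$ is an $F_\sigma$ set with $F\subseteq E$, and $Z:=E\setminus F$ satisfies $Z\subseteq E\setminus C_n$ for every $n$, hence $\mu(Z)=0$; this gives $E=F\cup Z$. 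Symmetrically, applying Proposition~\ref{inner=outer}(2), for each $n$ pick an open set $O_n\supseteq E$ with $\mu(O_n\setminus E)<1/n$; then $G:=\bigcap_{n=1}^\infty O_n$ is a $G_\delta$ set with $E\subseteq G$ and $N:=G\setminus E\subseteq O_n\setminus E$ for all $n$, so $\mu(N)=0$ and $E=G\setminus N$.

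Next I would record the regularity consequences. Inner regularity on $\mathfrak{M}$ is immediate: for $E\in\mathfrak{M}$, Proposition~\ref{inner=outer}(3) gives closed sets $C\subseteq E$ with $\mu(E\setminus C)$ arbitrarily small; when $\mu(E)<\infty$ this forces $\sup_{C\subseteq E,\,C\text{ closed}}\mu(C)=\mu(E)$, and when $\mu(E)=\infty$ the same sets have $\mu(C)=\mu(E)-\mu(E\setminus C)=\infty$ (using $\mu(E\setminus C)<\infty$), so the supremum is again $\mu(E)$. Outer regularity on $\mathfrak{M}$ follows the same way from Proposition~\ref{inner=outer}(2), handling $\mu(E)=\infty$ trivially by taking $O=X$ and otherwise using $\mu(O)=\mu(E)+\mu(O\setminus E)$. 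Finally, Borel regularity is exactly Remark~\ref{OR-BR} applied to the outer regularity just established (or, alternatively, it is read off directly from $E=G\setminus N\subseteq G$ with $G$ Borel and $\mu(G\setminus E)=\mu(N)=0$, hence $\mu(G)=\mu(E)$).

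I do not anticipate a genuine obstacle here; the only point requiring a little care is the bookkeeping with infinite measure in the inner/outer regularity statements, namely justifying $\mu(C)=\mu(E)$ or $\mu(O)=\mu(E)$ by subtracting a \emph{finite} quantity — one must note that $\mu(E\setminus C)<\infty$ and $\mu(O\setminus E)<\infty$ (which they are, being less than $1/n$) before invoking additivity. Everything else is a routine monotone-limit argument, so the corollary is essentially a formal consequence of the definitions and the cited results.
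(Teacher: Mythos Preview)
Your proposal is correct and follows exactly the approach the paper intends: the corollary is stated there as an immediate consequence of Proposition~\ref{inner=outer} and Remark~\ref{OR-BR}, and you have supplied the routine details (the $F_\sigma$/$G_\delta$ decompositions via $1/n$-approximations and the finite/infinite case split for inner and outer regularity). There is nothing to add or correct.
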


From Corollary~\ref{str_reg_impl_Fsigma} and Remark~\ref{OR-BR}, we have that
\[\mbox{strong regularity }\Longrightarrow\mbox{ outer regularity}\Longrightarrow\mbox{ Borel regularity.}
\]
The converses of these implications fail, in general; see Examples~\ref{ex:wkL-1}, \ref{weakl}, and Remark \ref{r52}.  Furthermore, inner regularity does not imply strong regularity; see Remark~\ref{inout}.  However, as the next two results highlight, some of these conditions are equivalent under certain mild assumptions on the topological measure space.
\begin{prop}\label{out_reg_sigma_fin}
Let $(X,\tau,\mathfrak{M},\mu)$ be a topological measure space. If $\mu$ is $\sigma$-finite and outer regular on $\mathfrak{M}$, then $\mu$ is strongly regular.
\end{prop}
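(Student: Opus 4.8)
The plan is to upgrade outer regularity to strong regularity by exploiting $\sigma$-finiteness to reduce to sets of finite measure, and then to show that outer regularity already delivers the "closed set from below" approximation on finite-measure sets (which, by Proposition~\ref{inner=outer}, is equivalent to strong regularity). Write $X = \bigcup_{j=1}^\infty X_j$ with $X_j \in \mathfrak{M}$, $\mu(X_j) < \infty$, and without loss of generality assume the $X_j$ are increasing. By Proposition~\ref{inner=outer}, it suffices to verify condition \textit{(3)} there: for every $E \in \mathfrak{M}$ and every $\varepsilon \in (0,\infty)$, there is a closed set $C \subseteq E$ with $\mu(E \setminus C) < \varepsilon$.

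First I would handle a measurable set $E$ with $\mu(E) < \infty$. The key trick is to apply outer regularity not to $E$ but to its "complement within a large finite box." Fix $\varepsilon > 0$. Since $\mu(E) < \infty$, choose $j$ so large that $\mu(E \setminus X_j) < \varepsilon/2$; replacing $E$ by $E \cap X_j$ costs at most $\varepsilon/2$, so we may assume $E \subseteq X_j$ with $\mu(X_j) < \infty$. Now consider the measurable set $X_j \setminus E$, which also has finite measure. Applying outer regularity \eqref{out-reg} to $X_j \setminus E$ yields an open set $O \supseteq X_j \setminus E$ with $\mu(O) < \mu(X_j \setminus E) + \varepsilon/2$; since all these quantities are finite, $\mu\big(O \setminus (X_j \setminus E)\big) < \varepsilon/2$. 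Set $C := X_j \setminus O$. Then $C$ is closed (as $X_j$ need not be closed, one must be slightly careful here — see the obstacle below), $C \subseteq X_j \setminus (X_j \setminus E) = E \cap X_j \subseteq E$, and $E \setminus C \subseteq (E \cap X_j) \setminus C \subseteq O \setminus (X_j \setminus E)$, so $\mu(E \setminus C) \le \mu(O\setminus(X_j\setminus E)) < \varepsilon/2$, giving the required closed approximation for finite-measure $E$ up to the earlier $\varepsilon/2$ from the box truncation.

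Next, for a general measurable set $E$ with $\mu(E) = \infty$, I would exhaust: let $E_j := E \cap X_j$, so $E_j \nearrow E$ and $\mu(E_j) < \infty$. By the finite-measure case, pick closed $C_j \subseteq E_j$ with $\mu(E_j \setminus C_j) < \varepsilon/2^{j}$; one can further arrange the partial unions to be closed or simply pass to $D_n := \bigcup_{j=1}^n C_j$, a finite union of closed sets, hence closed, with $D_n \subseteq E_n \subseteq E$ and $\mu(E_n \setminus D_n) \le \sum_{j\le n}\mu(E_j\setminus C_j) < \varepsilon$. Since $\mu(E_n) \to \mu(E)$ and, when $\mu(E) = \infty$, $\mu(D_n) \ge \mu(E_n) - \varepsilon \to \infty$, the infimum $\inf_{C \text{ closed}, C \subseteq E}\mu(E\setminus C)$ is $0$ (it is $0$ because $\mu(E \setminus D_n)$ can be made small when $\mu(E)<\infty$, and $\mu(E\setminus D_n) = \infty$ is consistent with the infimum being $0$ only if... — so in the infinite case condition \textit{(3)} of Proposition~\ref{inner=outer} must be interpreted correctly; in fact $\mu(E\setminus C)$ need not be small, but the statement of strong regularity in Definition~\ref{str-in} is an infimum over pairs and one shows directly that it vanishes). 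I would double-check the exact reading of Definition~\ref{str-in} in the $\mu(E)=\infty$ case and argue accordingly, using $O = X$ on the open side if needed.

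The main obstacle I anticipate is the fact that the exhausting sets $X_j$ from $\sigma$-finiteness are merely measurable, not open or closed, so the slick complementation $C := X_j \setminus O$ need not be closed. The remedy is to absorb the topology differently: rather than truncating by $X_j$, apply outer regularity directly to $X \setminus E$ to get an open $O \supseteq X \setminus E$, but control $\mu(O \setminus (X\setminus E)) = \mu(O \cap E)$ only after intersecting with a finite box — i.e. first fix $j$ with $\mu(E \setminus X_j)$ small, then apply outer regularity to $(X\setminus E)$ and note $\mu(O \cap E \cap X_j) = \mu(O \cap E) - \mu(O\cap E \setminus X_j)$; since $O \cap E \subseteq O \setminus (X\setminus E)$ which we cannot yet bound globally... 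The cleanest fix, which I would adopt, is: since $\mu$ is $\sigma$-finite and outer regular, it is in particular outer regular, so for the finite-measure set $E \cap X_j$ apply outer regularity to its complement $X \setminus (E \cap X_j)$, whose measure may be infinite — but outer regularity still gives an open $O \supseteq X\setminus(E\cap X_j)$, and then $\mu(O \setminus (X \setminus (E\cap X_j))) = \mu(O \cap E \cap X_j)$, and this last quantity is finite and controlled because $O \cap E \cap X_j \subseteq O \setminus (X\setminus(E\cap X_j))$. Setting $C := X \setminus O$, a genuine closed set, $C \subseteq E \cap X_j \subseteq E$, and $\mu((E\cap X_j)\setminus C) = \mu(E \cap X_j \cap O) < \varepsilon$, which combined with $\mu(E \setminus X_j) < \varepsilon$ (finite-measure case) or the exhaustion argument (infinite case) completes the proof.
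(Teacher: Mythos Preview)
Your proposal has a genuine gap, and the paper's proof avoids it by aiming at condition \textit{(2)} of Proposition~\ref{inner=outer} rather than condition \textit{(3)}.

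The problem is in your ``cleanest fix'': you apply outer regularity to $X\setminus(E\cap X_j)$, a set that may have infinite measure. But outer regularity as defined in \eqref{out-reg} only asserts $\mu(A)=\inf_{O\supseteq A}\mu(O)$; when $\mu(A)=\infty$ this says nothing at all about $\mu(O\setminus A)$, so you cannot conclude that $\mu(O\cap E\cap X_j)$ is small. Your earlier attempt with $C:=X_j\setminus O$ fails for the reason you identified (the $X_j$ need not be closed), and the fix does not repair it. Your handling of the case $\mu(E)=\infty$ is also broken: the closed sets $D_n$ you build satisfy $D_n\subseteq E\cap X_n$, hence $\mu(D_n)<\infty$, so $\mu(E\setminus D_n)=\infty$ for every $n$ and condition \textit{(3)} is never verified.

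The paper's argument sidesteps all of this by proving condition \textit{(2)} directly. Given $E\in\mathfrak{M}$ and $\varepsilon>0$, apply outer regularity to each finite-measure set $A_j\cap E$ to obtain open $O_j\supseteq A_j\cap E$ with $\mu(O_j\setminus(A_j\cap E))<\varepsilon/2^j$; then $O:=\bigcup_j O_j$ is open, contains $E$ (since $X=\bigcup_j A_j$), and $\mu(O\setminus E)\le\sum_j\mu(O_j\setminus(A_j\cap E))<\varepsilon$. No complementation is needed, no case split on $\mu(E)$, and outer regularity is only ever invoked on sets of finite measure. The moral: use $\sigma$-finiteness to chop up $E$, not its complement.
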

Note that the conclusion of Proposition~\ref{out_reg_sigma_fin} can fail if the measure is not $\sigma$-finite or if the measure is simply Borel regular; see Remark~\ref{r52} and Example~\ref{ex:wkL-1}. 
\begin{proof}
Suppose that $\mu$ is $\sigma$-finite and outer regular on $\mathfrak{M}$. Then we can find
a sequence $\{A_{j}\}_{j=1}^\infty$ of sets having finite measure such that $X= \bigcup_{j=1}^\infty A_{j}$. Let $E\in\mathfrak{M}$ and $\varepsilon\in(0,\infty)$. For every $j\in \mathbb{N}$, the measure of $A_{j}\cap E$ is finite and so by the outer regularity condition \eqref{out-reg}, we can find a sequence $\{O_{j}\}_{j=1}^\infty$ of open sets such that $A_{j}\cap E \subseteq O_{j}$ and 
$\mu(O_{j} \setminus (A_{j}\cap E)) < \varepsilon/2^j$ for every $j\in \mathbb{N}$. Since $X=\bigcup_{j=1}^\infty A_j$, the open set $O := \bigcup_{j=1}^\infty O_{j}$ contains $E$ and satisfies
\begin{align*}
\mu(O \setminus E) = \mu\bigg( \bigcup_{j=1}^\infty O_{j} \setminus E \bigg) \leq \sum_{j=1}^\infty \mu(O_{j} \setminus E) \leq \sum_{j=1}^\infty \mu\big(O_{j} \setminus (A_{j}\cap E)\big) < \varepsilon.
\end{align*}
Thus, condition \textit{(2)} in Proposition~\ref{inner=outer} holds and so the strong regularity of $\mu$ now follows from \textit{(1)} in Proposition~\ref{inner=outer}.
\end{proof}

The next result follows immediately from Theorem~\ref{in-out-reg} and Proposition~\ref{out_reg_sigma_fin}.
\begin{cor}
\label{Borel-in-out-reg-2}
Let $(X, \tau,\mathfrak{M},\mu)$ be a topological measure space with the property that there exists a collection $\{O_j\}_{j=1}^\infty$ of open sets  satisfying \eqref{QLK-s-A2.bis-2}, and suppose that  every open set $U\subseteq X$ can be written as $U=F \cup Z$, where $F$ is an  $F_{\sigma}$ set and $Z$ is a null set.  If $\mu$ is Borel regular (equivalently, if $\mu$ is either outer or inner regular on $\mathfrak{M}$), then $\mu$ is strongly regular. Consequently, every Borel regular measure on a metric space is strongly regular.\footnote{We suppose that every ball has finite measure.}
\end{cor}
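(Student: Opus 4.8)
The plan is to treat this as a genuine corollary: the whole argument consists of chaining Theorem~\ref{in-out-reg} with Proposition~\ref{out_reg_sigma_fin}, plus two elementary metric-space verifications for the final clause. First I would observe that, under the standing hypothesis of the statement that every open set $U\subseteq X$ can be written as $U=F\cup Z$ with $F$ an $F_{\sigma}$ set and $Z$ a null set, condition \textit{(a)} of Theorem~\ref{in-out-reg} collapses to the single requirement that $\mu$ be Borel regular. Since \eqref{QLK-s-A2.bis-2} is also assumed, Theorem~\ref{in-out-reg} then applies and tells us that the three statements ``$\mu$ is Borel regular,'' ``$\mu$ is outer regular on $\mathfrak{M}$,'' and ``$\mu$ is inner regular on $\mathfrak{M}$'' are mutually equivalent in this setting; this already accounts for the parenthetical remark in the statement.

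Next, I would start from Borel regularity (equivalently, from outer regularity) and note that the covering in \eqref{QLK-s-A2.bis-2} forces $\mu$ to be $\sigma$-finite, since $X=\bigcup_{j=1}^{\infty}O_j$ with each $\mu(O_j)<\infty$. Thus $\mu$ is $\sigma$-finite and outer regular on $\mathfrak{M}$, so Proposition~\ref{out_reg_sigma_fin} applies directly and yields that $\mu$ is strongly regular. This completes the first assertion.

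For the ``consequently'' clause, I would check that a metric space $(X,d)$ on which every ball has finite measure satisfies both hypotheses of the first part. The covering condition \eqref{QLK-s-A2.bis-2} is obtained by fixing any point $x_0\in X$ and writing $X=\bigcup_{n=1}^{\infty}B(x_0,n)$, which exhausts $X$ because the metric is finite-valued, with each $B(x_0,n)$ open of finite measure. The $F_{\sigma}$ decomposition of open sets is the classical fact that in a metric space every open set $U$ is itself $F_{\sigma}$, via $U=\bigcup_{n=1}^{\infty}\{x\in X: d(x,X\setminus U)\ge 1/n\}$, with each set on the right closed (and with the usual convention $d(x,\emptyset)=+\infty$ handling the case $U=X$); so one may take $Z=\emptyset$. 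With both hypotheses verified, the first part gives that every Borel regular measure on such a space is strongly regular.

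I do not anticipate a real obstacle, as everything rests on results already established; the only points requiring a word of care are the two elementary metric-space verifications — that the integer-radius balls about a fixed point cover $X$ (which uses that a metric takes only finite values) and that open sets in a metric space are $F_{\sigma}$ — together with the trivial remark that \eqref{QLK-s-A2.bis-2} entails $\sigma$-finiteness.
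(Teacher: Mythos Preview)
Your proposal is correct and matches the paper's approach exactly: the paper states that the corollary ``follows immediately from Theorem~\ref{in-out-reg} and Proposition~\ref{out_reg_sigma_fin},'' and you have simply spelled out that chain of implications together with the standard metric-space verifications.
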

We remark that the conclusion of Corollary~\ref{Borel-in-out-reg-2} may not hold if either of the assumptions on the topological measure space are omitted; see Examples~\ref{ex:wkL-1} and \ref{weakl}.

The following characterization of the weak Lusin theorem is the main result of this section.
\begin{tw} \label{wLusin_equiv}
Let $(X,\tau,\mathfrak{M},\mu)$ be a topological measure space. Then, the following statements are equivalent.
\begin{enumerate}[(1)]
\item The measure $\mu$ is strongly regular. 
\item The weak Lusin theorem holds, i.e., for every measurable function $u:X\to\overline{\mathbb{R}}$ and any $\varepsilon\in(0,\infty)$, there exists closed set $C_{\varepsilon}\subseteq X$ such that 
$\mu(X\setminus C_{\varepsilon}) < \varepsilon$ and restriction of $u$ to $C_{\varepsilon}$ is continuous with respect to $\tau\vert_{C_{\varepsilon}}$.
\end{enumerate}
\end{tw}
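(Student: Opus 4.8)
The plan is to prove the two implications separately, leaning on the equivalent formulations of strong regularity provided by Proposition~\ref{inner=outer}. For the direction $(2)\Rightarrow(1)$, I would apply the weak Lusin theorem to the characteristic function $u=\chi_E$ of an arbitrary measurable set $E\in\mathfrak{M}$. Fixing $\varepsilon>0$, the weak Lusin theorem yields a closed set $C_\varepsilon$ with $\mu(X\setminus C_\varepsilon)<\varepsilon$ on which $\chi_E$ is continuous. Since $\chi_E$ restricted to $C_\varepsilon$ is continuous and takes only the values $0$ and $1$, the sets $E\cap C_\varepsilon$ and $C_\varepsilon\setminus E$ are both relatively closed in $C_\varepsilon$, hence closed in $X$ (as $C_\varepsilon$ is closed). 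Set $C:=E\cap C_\varepsilon$, a closed subset of $E$; then $E\setminus C\subseteq X\setminus C_\varepsilon$, so $\mu(E\setminus C)<\varepsilon$. As $\varepsilon$ was arbitrary, condition $(3)$ of Proposition~\ref{inner=outer} holds, and therefore $\mu$ is strongly regular.

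For the harder direction $(1)\Rightarrow(2)$, I would reduce first to bounded functions and then build the closed set by a countable approximation. Given a measurable $u:X\to\overline{\mathbb{R}}$, compose with a homeomorphism $\overline{\mathbb{R}}\to[-1,1]$ (e.g.\ $t\mapsto \tfrac{2}{\pi}\arctan t$ extended to $\pm\infty$) so that it suffices to treat $u:X\to[-1,1]$; a homeomorphism carries a continuous restriction to a continuous restriction, so no generality is lost. Fix $\varepsilon>0$. For each $n\in\mathbb{N}$, partition $[-1,1]$ into finitely many disjoint Borel (indeed half-open interval) pieces $I_1^n,\dots,I_{k_n}^n$ of length $\le 1/n$, and let $E_i^n:=u^{-1}(I_i^n)\in\mathfrak{M}$, so that $X=\bigsqcup_{i} E_i^n$ for each fixed $n$. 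By strong regularity (condition $(3)$ of Proposition~\ref{inner=outer}) pick, for each $n$ and each $i$, a closed set $C_i^n\subseteq E_i^n$ with $\mu(E_i^n\setminus C_i^n)<\varepsilon\,2^{-n}/k_n$. The sets $C_1^n,\dots,C_{k_n}^n$ are pairwise disjoint closed sets, their union $D_n:=\bigcup_i C_i^n$ is closed with $\mu(X\setminus D_n)<\varepsilon\,2^{-n}$, and on $D_n$ the function $u$ oscillates by at most $1/n$ on each of the (relatively clopen, since finitely many disjoint closed) pieces $C_i^n$. Finally put $C_\varepsilon:=\bigcap_{n=1}^\infty D_n$, which is closed and satisfies $\mu(X\setminus C_\varepsilon)\le\sum_n \mu(X\setminus D_n)<\varepsilon$.

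It remains to check that $u\restriction C_\varepsilon$ is continuous for the subspace topology $\tau|_{C_\varepsilon}$. I would argue this directly from the definition of continuity: fix $x\in C_\varepsilon$ and $\delta>0$, choose $n$ with $1/n<\delta$, and let $i$ be the index with $x\in C_i^n$. Because the finitely many closed sets $C_1^n,\dots,C_{k_n}^n$ are pairwise disjoint, the set $C_i^n\cap C_\varepsilon$ is open in $C_\varepsilon$ (its complement in $C_\varepsilon$ is the union of the remaining $C_j^n\cap C_\varepsilon$, a finite union of closed sets). This relatively open neighborhood $U:=C_i^n\cap C_\varepsilon$ of $x$ in $C_\varepsilon$ satisfies $u(U)\subseteq I_i^n$, which has length $\le 1/n<\delta$, so $|u(y)-u(x)|<\delta$ for all $y\in U$; in the $\overline{\mathbb{R}}$-valued case one phrases this via basic neighborhoods in $[-1,1]$ pulled back through the homeomorphism. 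Hence $u\restriction C_\varepsilon$ is continuous, completing the proof.

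The main obstacle I anticipate is purely bookkeeping rather than conceptual: one must be careful that the pieces $C_i^n$ are genuinely relatively clopen in $D_n$ (which uses finiteness of the partition at each level — an \emph{infinite} disjoint union of closed sets need not have this property), and that the diagonal intersection $C_\varepsilon=\bigcap_n D_n$ both keeps the measure of the complement small and retains enough structure that, at the point $x$, the relevant piece $C_i^n\cap C_\varepsilon$ is still open in $C_\varepsilon$. Once the finiteness of each level's partition is used correctly, continuity at each point follows from a single level $n$ with $1/n$ below the prescribed tolerance, so no uniformity across levels is needed.
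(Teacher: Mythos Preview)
Your proof is correct. The direction $(2)\Rightarrow(1)$ is essentially identical to the paper's (you even streamline it slightly by observing directly that $E\cap C_\varepsilon$, being relatively closed in the closed set $C_\varepsilon$, is closed in $X$, whereas the paper passes through an auxiliary closed set $C_n$ with $E\cap F_n = C_n\cap F_n$).

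For $(1)\Rightarrow(2)$ you take a genuinely different route. The paper fixes a countable basis $\{V_n\}$ for the topology of $\overline{\mathbb{R}}$, sets $E_n:=u^{-1}(V_n)$, and uses the \emph{two-sided} form of strong regularity to sandwich $D_n\subseteq E_n\subseteq O_n$ with $\mu(O_n\setminus D_n)<\varepsilon/2^n$; then $C_\varepsilon:=X\setminus\bigcup_n(O_n\setminus D_n)$ is closed, and one checks in one line that $O_n\cap C_\varepsilon = E_n\cap C_\varepsilon$, so preimages of basic open sets are relatively open and continuity is immediate. Your argument instead reduces to $[-1,1]$-valued $u$, uses only the \emph{inner} form of strong regularity (condition~(3) of Proposition~\ref{inner=outer}), builds finite closed partitions $D_n$ at each scale $1/n$, and proves continuity via an oscillation/clopen argument exploiting the finiteness of each level. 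Both are valid: the paper's basis argument is shorter and avoids the homeomorphism reduction and the level-by-level bookkeeping you flag as the main obstacle, while your approach is closer to the classical simple-function proofs of Lusin's theorem and makes visible that only the one-sided approximation is really needed at this step.
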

\begin{proof}
Assume first that $\mu$ is strongly regular and let $\{V_{n}\}_{n=1}^{\infty}$ be a countable basis for the topology of $\overline{\mathbb{R}}$. For each $n\in\mathbb{N}$, let $E_n := u^{-1}(V_{n})\subseteq X$. We fix $\varepsilon\in (0,\infty)$ and $n\in \mathbb{N}$. Using the strong regularity of $\mu$, we can find an open set $O_n\subseteq X$ and a closed set $D_{n}\subseteq X$ such that $D_{n}\subseteq E_{n} \subseteq O_{n}$ and
$\mu(O_{n}\setminus D_{n}) < \varepsilon/2^{n}.$
We define 
\begin{align*}
F_{\varepsilon} := \left(\bigcup_{n=1}^{\infty} O_{n}\setminus D_{n}\right)  \quad \text{and} \quad C_{\varepsilon} := X\setminus F_{\varepsilon}.
\end{align*}
Observe that $C_{\varepsilon}\subseteq X$ is a closed set and 
\[
\mu(X\setminus C_{\varepsilon})=\mu(F_{\varepsilon})\leq\sum_{n=1}^{\infty}\mu(O_{n}\setminus D_{n}) < \varepsilon.
\] 
We shall now show that the function $f := u\vert_{C_{\varepsilon}}$ is continuous. Observe that for every $n\in \mathbb{N}$, we have 
\begin{align*}
 O_{n}\cap C_{\varepsilon} =  E_{n}\cap C_{\varepsilon} = f^{-1}(V_n).
\end{align*}
Thus, $f^{-1}(V_n) =  O_{n}\cap C_{\varepsilon}$ is an open set in the subspace topology on $C_{\varepsilon}$. Since $\{V_{n} \}_{n=1}^{\infty}$ is the basis for the topology of $\overline{\mathbb{R}}$, it follows that $f$ is continuous. Hence, the weak Lusin theorem holds.

Suppose next that \textit{(2)} holds. To show that $\mu$ is strongly regular, let  $E\in\mathfrak{M}$ be any measurable set. By \textit{(2)}, for every $n\in \mathbb{N}$ there exists a closed set $F_{n}\subseteq X$ such that $\mu(X\setminus F_{n}) < 1/n$ and the function  $u_{n} := {\chi_{E}}\vert_{F_{n}}$ is continuous. Hence, $u_{n}^{-1}(\{1\}) = E\cap F_{n} $ is closed in the topology $\tau\vert_{F_{n}}$ and so there exists a set $C_{n}\subseteq X$, which is closed in topology $\tau$ and satisfies $E\cap F_{n} = C_{n} \cap F_{n}$. Therefore,
the set $C_{n}\cap F_{n}\subseteq E$ is closed in $\tau$. Moreover, since for every $n\in\mathbb{N}$,
\begin{align*}
\mu\big(E\setminus (C_{n}\cap F_{n})\big)=\mu(E\setminus F_{n}) \leq \mu(X\setminus F_{n}) < 1/n,
\end{align*}
it follows that 
\begin{align*}
\inf_{\substack{C- \text{\textit{closed}},\\ C\subseteq E}} \mu(E \setminus C) =0.
\end{align*}
Thus, condition \textit{(3)} in Proposition~\ref{inner=outer} holds and so the strong regularity of $\mu$ now follows from \textit{(1)} in Proposition~\ref{inner=outer}. This completes the proof of Theorem~\ref{wLusin_equiv}.
\end{proof}

A slight modification of the proof of Theorem~\ref{wLusin_equiv} gives the following conclusion.

\begin{prop}\label{weak_Lus_Bor}
Let $(X,\tau,\mathfrak{M},\mu)$ be a topological measure space.
%
Suppose that for every measurable function $u:X\to\overline{\mathbb{R}}$ and for every $\varepsilon\in(0,\infty)$, there exists a  set $B_{\varepsilon}\in{\it Borel}_{\tau}(X)$ such that 
$\mu(X\setminus B_{\varepsilon}) < \varepsilon$ and the restriction of $u$ to $B_{\varepsilon}$ is continuous with respect to $\tau\vert_{B_{\varepsilon}}$.
Then $\mu$ is Borel regular.
\end{prop}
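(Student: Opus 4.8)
The plan is to mimic the second half of the proof of Theorem~\ref{wLusin_equiv} (the implication \textit{(2)} $\Rightarrow$ \textit{(1)}), but chasing the weaker conclusion ``Borel regular'' through the characterization of Borel regularity via the $F_\sigma/G_\delta$-plus-null-set decomposition. The key point is that continuity of $u|_{B_\varepsilon}$ with respect to the \emph{subspace} topology on a Borel set $B_\varepsilon$, applied to a characteristic function, yields a Borel (not merely closed) set inside $B_\varepsilon$ that agrees with $E \cap B_\varepsilon$.

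First I would fix a measurable set $E\in\mathfrak{M}$ and apply the hypothesis to the characteristic function $u:=\chi_{X\setminus E}:X\to\overline{\mathbb{R}}$. For each $n\in\mathbb{N}$ this gives a Borel set $B_n\in{\it Borel}_\tau(X)$ with $\mu(X\setminus B_n)<1/n$ such that $u|_{B_n}$ is continuous on $(B_n,\tau|_{B_n})$. Since $u|_{B_n}$ takes only the values $0$ and $1$, the set $u|_{B_n}^{-1}(\{0\}) = E\cap B_n$ is \emph{open} in $\tau|_{B_n}$; hence there is an open set $O_n\in\tau$ with $E\cap B_n = O_n\cap B_n$. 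In particular $E\cap B_n = O_n\cap B_n\in{\it Borel}_\tau(X)$, and $\mu\big((X\setminus E)\setminus B_n\big)\le\mu(X\setminus B_n)<1/n$, so $\mu\big((X\setminus E)\setminus B_n\big)\to 0$. Setting $B:=\bigcup_{n=1}^\infty (E\cap B_n) = \bigcup_{n=1}^\infty (O_n\cap B_n)$, we get a Borel set with $B\subseteq E$ and $\mu(E\setminus B)=\mu\big(\bigcap_{n=1}^\infty (E\setminus B_n)\big)=\lim_{n\to\infty}\mu(E\setminus B_n)=0$ — wait, this needs $\mu(E\setminus B_1)<\infty$ to pass the limit inside, so I would instead argue directly: $E\setminus B\subseteq E\setminus(E\cap B_n)=E\setminus B_n$ for every $n$, whence $\mu(E\setminus B)\le\mu(X\setminus B_n)<1/n$ for all $n$, giving $\mu(E\setminus B)=0$ with no finiteness assumption.

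Now $B$ is a Borel subset of $E$ with $\mu(E\setminus B)=0$. To finish, apply this construction to the complement: there is a Borel set $B'\subseteq X\setminus E$ with $\mu\big((X\setminus E)\setminus B'\big)=0$. Then $A:=X\setminus B'$ is a Borel set, $E\subseteq A$ (since $B'\cap E=\emptyset$), and $A\setminus E=(X\setminus E)\setminus B'$ has measure zero, so $\mu(A)=\mu(E)$ by additivity applied to $A = E \sqcup (A\setminus E)$ — this last step is valid even when $\mu(E)=\infty$. Thus $\mu$ is Borel regular.

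The only genuinely delicate point, and the one I'd flag as the main obstacle, is the passage from ``$u|_{B_n}$ continuous'' to ``$E\cap B_n$ is relatively open (hence Borel in $X$)'': one must be careful that $O_n\cap B_n$ is a Borel subset of $X$ (true, since both $O_n$ and $B_n$ are Borel in $X$), and that no $\sigma$-finiteness is secretly needed — which is why I avoid any ``continuity of measure along a decreasing sequence'' argument and instead bound $\mu(E\setminus B)$ by each $\mu(X\setminus B_n)$ directly. Everything else is a routine adaptation of the argument already given for Theorem~\ref{wLusin_equiv}.
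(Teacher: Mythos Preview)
Your proof is correct and follows essentially the same approach as the paper's: apply the hypothesis to a characteristic function, use continuity of the restriction to deduce that $E\cap B_n$ is Borel in $X$, and then combine over $n$. The only cosmetic difference is that the paper works with $\chi_E$ and directly builds Borel \emph{supersets} $U_n:=(C_n\cap B_n)\cup(X\setminus B_n)\supseteq E$ with $\mu(U_n\setminus E)<1/n$, whereas you first produce a Borel \emph{subset} of $E$ with null complement in $E$ and then pass to the complement---a slightly longer but equivalent route.
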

Let us remark that we will see in Example~\ref{weakl}, that the reverse implication in Proposition~\ref{weak_Lus_Bor} does not hold, in general.
\begin{proof}
Let $E\in\mathfrak{M}$ be any measurable set. Under the current assumptions, for every $n\in \mathbb{N}$, there is a Borel set $B_{n}\subseteq X$ such that $\mu(X\setminus B_{n}) < 1/n$ and the function $u_{n} := \chi_{E}\vert_{B_{n}}$ is continuous.  Hence, $u_{n}^{-1}(\{1\}) = E\cap B_{n}$ is closed in topology $\tau\vert_{B_{n}}$ and so there exists a set $C_{n}\subseteq X$, which is closed in topology $\tau$ and satisfies $E\cap B_{n} = C_{n} \cap B_{n}$. Observe that for each $n\in \mathbb{N}$,
\begin{align*}
E = \big(E\cap B_{n}) \cup \big(E \setminus B_{n} \big)=  \big(C_{n}\cap B_{n}) \cup \big( E \setminus B_{n} \big) \subseteq \big(C_{n}\cap B_{n}) \cup \big( X \setminus B_{n} \big).
\end{align*}
For each  $n\in \mathbb{N}$, let $U_n := \big(C_{n}\cap B_{n}) \cup \big( X \setminus B_{n} \big) \in Borel_{\tau}(X)$. Then $E\subseteq U_n$ and for each $n\in \mathbb{N}$,
$\mu(U_n \setminus E) \leq  \mu( X \setminus B_{n}) < 1/n$.
Thus,
\begin{align*}
\inf_{\substack{U- \text{\textit{Borel}},\\ E\subseteq U}} \mu(U \setminus E) =0,
\end{align*}
which further implies that $\mu$ is  Borel regular. This completes the proof of Proposition~\ref{weak_Lus_Bor}.
\end{proof}

\section{Almost normal spaces and the strong Lusin theorem}
\label{sect:stronglusin}

In this section we identify necessary and sufficient conditions for the strong Lusin theorem to hold (see Theorem~\ref{str_Lus_equiv}).
The proof of the following result can be found in \cite[Theorem 15.8]{Willard}.

\begin{lem}[Tietze extension theorem]\label{tietze}
Given a topological space $(X,\tau)$, the following statements are equivalent.
\begin{enumerate}[$a)$]
\item The topological space $(X,\tau)$ is normal, in the sense that if $A,B\subseteq X$ are two disjoint closed sets, then there exist two disjoint open sets $U,V\subseteq X$ such that $A\subseteq U$ and $B\subseteq V$.

\item If $C\subseteq X$ is closed in $\tau$ and $f:C\to\overline{\mathbb{R}}$ 
is a continuous function with respect to $\tau\vert_C$, then there exists a function
$F:X\to\overline{\mathbb{R}}$, which is continuous with respect to $\tau$ and satisfies $F=f$ pointwise in $C$.
\end{enumerate}
\end{lem}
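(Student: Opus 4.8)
The plan is to establish the two implications separately; $b)\Rightarrow a)$ is short, while $a)\Rightarrow b)$ is the classical Tietze extension theorem and carries essentially all the weight. For $b)\Rightarrow a)$, suppose $A,B\subseteq X$ are disjoint closed sets. Then $C:=A\cup B$ is closed, and the function $f:C\to\overline{\mathbb{R}}$ given by $f\equiv 0$ on $A$ and $f\equiv 1$ on $B$ is continuous with respect to $\tau\vert_C$, since $A$ and $B$ are each relatively closed and open in $C$. By $b)$ there is a continuous $F:X\to\overline{\mathbb{R}}$ with $F=f$ on $C$, and then $U:=F^{-1}\big([-\infty,\tfrac12)\big)$ and $V:=F^{-1}\big((\tfrac12,\infty]\big)$ are disjoint open sets with $A\subseteq U$ and $B\subseteq V$; hence $(X,\tau)$ is normal.

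For $a)\Rightarrow b)$, I would first reduce to bounded functions: fix a homeomorphism $\varphi:\overline{\mathbb{R}}\to[-1,1]$ (for instance $\varphi(t)=t/(1+|t|)$ with $\varphi(\pm\infty)=\pm1$), and note that extending $f$ is equivalent to extending $\varphi\circ f:C\to[-1,1]$, so it suffices to extend an arbitrary continuous $f:C\to[-1,1]$ to a continuous $F:X\to[-1,1]$ (since the target is the compact interval $[-1,1]$, no separate step is needed to keep the extension inside the bound). The engine is Urysohn's lemma: given disjoint closed sets $A_0,A_1\subseteq X$, there is a continuous $h:X\to[0,1]$ with $h\equiv0$ on $A_0$ and $h\equiv1$ on $A_1$. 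This is where normality enters, via the standard construction of open sets $\{U_q\}$ indexed by the dyadic rationals $q\in[0,1]$ with $A_0\subseteq U_q$, $U_1=X\setminus A_1$, and $\overline{U_q}\subseteq U_{q'}$ whenever $q<q'$ (each insertion step uses that, by normality, a closed set contained in an open set admits an open neighborhood whose closure is still inside that open set), followed by setting $h(x):=\inf\{q:x\in U_q\}$ and verifying continuity.

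Granted Urysohn's lemma, I would build $F$ as a uniformly convergent series. The key step: if $g:C\to[-r,r]$ is continuous, then applying Urysohn's lemma to the disjoint sets $g^{-1}\big([-r,-r/3]\big)$ and $g^{-1}\big([r/3,r]\big)$ (closed in $C$, hence in $X$ since $C$ is closed) and rescaling yields a continuous $\psi:X\to[-r/3,r/3]$ with $|g-\psi|\le\tfrac23 r$ on $C$. Iterating from $g=f$, $r=1$: having found $\psi_0,\dots,\psi_{n-1}$ with $\big|f-\sum_{k=0}^{n-1}\psi_k\big|\le(2/3)^n$ on $C$, apply the key step with $r=(2/3)^n$ to produce $\psi_n:X\to\mathbb{R}$ with $|\psi_n|\le\tfrac13(2/3)^n$ and $\big|f-\sum_{k=0}^{n}\psi_k\big|\le(2/3)^{n+1}$ on $C$. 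Then $F:=\sum_{n=0}^{\infty}\psi_n$ converges uniformly on $X$, hence is continuous, satisfies $|F|\le\sum_{n\ge0}\tfrac13(2/3)^n=1$, and equals $f$ on $C$; composing with $\varphi^{-1}$ gives the desired extension into $\overline{\mathbb{R}}$.

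I expect the main obstacle to be Urysohn's lemma itself — the recursive dyadic construction of the nested family $\{U_q\}$ and the proof that $h(x)=\inf\{q:x\in U_q\}$ is continuous (showing both $\{h<a\}$ and $\{h>a\}$ are open). That is the genuinely topological heart of the argument; once it is available, the extension is produced by the routine telescoping and uniform-convergence bookkeeping above, and the reduction via $\varphi$ is purely formal.
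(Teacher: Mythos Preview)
Your proof is correct and is the standard textbook argument. Note, however, that the paper does not actually prove this lemma: it simply states the result and refers the reader to \cite[Theorem~15.8]{Willard} for the proof. So there is no in-paper argument to compare against; what you have written is essentially the proof one finds in Willard (Urysohn's lemma from normality, then the $(2/3)^n$ telescoping series to build the extension, with the reduction to $[-1,1]$ via a homeomorphism $\overline{\mathbb{R}}\cong[-1,1]$), together with the easy converse via separating $A$ and $B$ by level sets of the extended indicator.
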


In the spirit of the characterization in Lemma~\ref{tietze}, we introduce the following condition.

\begin{defi}\label{almost_normal}
A topological measure space $(X,\tau,\mathfrak{M},\mu)$ is said to be \emph{almost normal} if, for every set $C\subseteq X$ that is closed in $\tau$, any function $f:C\to\overline{\mathbb{R}}$ that is continuous
with respect to $\tau\vert_C$, and any $\varepsilon \in (0,\infty)$, there exist both a set $C_{\varepsilon} \subseteq C$, which is closed in $\tau$, and a function $F_{\varepsilon}:X\to\overline{\mathbb{R}}$,  which is continuous with respect to $\tau$, satisfying $\mu(C\setminus C_{\varepsilon}) < \varepsilon$ and $F_{\varepsilon}=f$ pointwise in $C_{\varepsilon}$.
\end{defi}

In the next result, we construct a bridge between normality and almost normality.
\begin{prop}\label{norm_almost_Tietze}
Let $(X,\tau)$ be a topological space. Then, the following statements are equivalent.
\begin{enumerate}[(1)]
\item The topological space $(X,\tau)$ is normal.
\item For every Borel measure $\mu:\mathfrak{M}\to[0,\infty]$,  the topological measure space $(X,\tau, \mathfrak{M}, \mu)$ is almost normal.
\end{enumerate}
\end{prop}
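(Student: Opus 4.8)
The plan is to prove both implications of Proposition~\ref{norm_almost_Tietze}. The direction \textit{(1)}~$\Rightarrow$~\textit{(2)} is the easy one: if $(X,\tau)$ is normal, then given any Borel measure $\mu$, a closed set $C$, a continuous $f:C\to\overline{\mathbb{R}}$, and $\varepsilon>0$, we invoke the Tietze extension theorem (Lemma~\ref{tietze}, the implication $a)\Rightarrow b)$) to extend $f$ to a continuous $F:X\to\overline{\mathbb{R}}$. Then the choice $C_\varepsilon:=C$ and $F_\varepsilon:=F$ trivially satisfies $\mu(C\setminus C_\varepsilon)=\mu(\emptyset)=0<\varepsilon$ and $F_\varepsilon=f$ on $C_\varepsilon$. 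Hence $(X,\tau,\mathfrak{M},\mu)$ is almost normal, and since $\mu$ was arbitrary, \textit{(2)} holds.

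For the converse \textit{(2)}~$\Rightarrow$~\textit{(1)}, the strategy is to choose a cleverly designed Borel measure $\mu$ for which the almost normality conclusion is strong enough to force genuine Tietze extendability, and then apply Lemma~\ref{tietze} in the reverse direction ($b)\Rightarrow a)$). The natural candidate is a measure that assigns positive mass to every nonempty set, so that ``$\mu(C\setminus C_\varepsilon)<\varepsilon$'' for arbitrarily small $\varepsilon$ cannot be achieved by discarding points. Concretely, I would first handle the trivial case $X=\emptyset$ separately (it is vacuously normal), and otherwise fix a point $x_0\in X$ and take $\mu$ to be, say, the Dirac-type measure $\mu:=\sum_{x\in X}\delta_x$ on $\mathfrak{M}:=2^X$ (or, to keep things finite, one may instead work with a single Dirac mass placed appropriately — but the counting measure on $2^X$ is cleanest since it is a Borel measure for every topology). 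The key observation is that with this $\mu$, any nonempty set has measure at least $1$, so if $C$ is closed, $f:C\to\overline{\mathbb{R}}$ is continuous, and we apply almost normality with, for instance, $\varepsilon=\tfrac12$, we obtain a closed $C_{1/2}\subseteq C$ with $\mu(C\setminus C_{1/2})<\tfrac12$, which forces $C\setminus C_{1/2}=\emptyset$, i.e.\ $C_{1/2}=C$; the accompanying continuous $F_{1/2}:X\to\overline{\mathbb{R}}$ then satisfies $F_{1/2}=f$ on all of $C$. This exhibits a continuous extension of $f$ to $X$, verifying statement $b)$ of Lemma~\ref{tietze}, and therefore $(X,\tau)$ is normal.

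The one point requiring a little care — and the main (minor) obstacle — is checking that the chosen $\mu$ is a legitimate Borel measure in the sense of this paper, i.e.\ a nonnegative countably additive measure defined on a $\sigma$-algebra containing $\mathit{Borel}_\tau(X)$. The counting measure on the full power set $2^X$ is manifestly a nonnegative measure defined on a $\sigma$-algebra containing all Borel sets, and countable additivity of the (possibly infinite-valued) counting measure is standard; its only ``defect'' is that it is not $\sigma$-finite when $X$ is uncountable, but Definition~\ref{almost_normal} and the statement of Proposition~\ref{norm_almost_Tietze} impose no finiteness hypothesis, so this is harmless. One should also note that every subset of $X$ is measurable, so the quantities $\mu(C\setminus C_\varepsilon)$ appearing in the definition of almost normality are well defined. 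With these verifications in place, the argument above goes through, completing the proof of the equivalence.
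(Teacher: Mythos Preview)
Your proof is correct and follows essentially the same strategy as the paper's: choose a Borel measure that assigns positive mass to every nonempty set (you use the counting measure on $2^X$; the paper uses the measure with $\mu(\emptyset)=0$ and $\mu(E)=\infty$ otherwise, defined on $\mathit{Borel}_\tau(X)$), so that the almost-normality condition $\mu(C\setminus C_\varepsilon)<\varepsilon$ forces $C_\varepsilon=C$ and hence a genuine continuous extension. The only cosmetic difference is that the paper then separates two given disjoint closed sets directly by extending the characteristic function $\chi_A$ on $A\cup B$, whereas you verify condition $b)$ of Lemma~\ref{tietze} for arbitrary $f$ and invoke the implication $b)\Rightarrow a)$.
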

\begin{proof}
We only have to show \textit{(2)} implies \textit{(1)}, since the implication \textit{(1)} $\Rightarrow$ \textit{(2)} follows from Tietze extension theorem (Lemma \ref{tietze}).

Assume $\textit{(2)}$ holds and let $A, B$ be two closed disjoint subsets of $X$. Let $\mathfrak{M}:= Borel_{\tau}(X)$ and consider the Borel measure $\mu : \mathfrak{M} \rightarrow [0,\infty]$, that is given by $\mu(\emptyset):=0$ and $\mu(E):=\infty$ for nonempty $E \in \mathfrak{M}$. Moreover, we consider the function $f: A\cup B \rightarrow \overline{\mathbb{R}}$, defined by $f:= \chi_{A}$, which is continuous with respect to $\tau\vert_{A \cup B}$. Since $(X, \tau, \mathfrak{M}, \mu)$ is almost normal, there exist both a set $C \subseteq A \cup B$, which is closed in $\tau$, and a continuous function $g: X \to \overline{\mathbb{R}}$, such that $\mu((A \cup B) \setminus C) < 1$ and $g = f$ pointwise in $C$. From the definition of $\mu$, we have $A\cup B = C$. Hence, $g$ is a continuous function satisfying $g\vert_{A} = 1$ and $g\vert_{B} = 0$. Consequently, the open sets $U:=g^{-1}((1/2,\infty))\subseteq X$ and $V:=g^{-1}((-\infty,1/2))\subseteq X$ are disjoint and satisfy $A\subseteq U$ and $B\subseteq V$. Thus, $(X,\tau)$ is normal, as wanted. 
\end{proof}

%

We are now ready to present the main result of this section.

\begin{tw}\label{str_Lus_equiv}
Let $(X,\tau,\mathfrak{M},\mu)$ be a topological measure space. Then, the following statements are equivalent.
\begin{enumerate}[(1)]
\item The topological  measure space $(X,\tau,\mathfrak{M},\mu)$ is almost normal and $\mu$ is strongly regular.
\item The strong Lusin theorem holds, i.e., for every measurable function $u:X\to\overline{\mathbb{R}}$ and any $\varepsilon\in(0,\infty)$, there exists a closed set $C_{\varepsilon}\subseteq X$ and a continuous function $g:X\to\overline{\mathbb{R}}$ such that 
$\mu(X\setminus C_{\varepsilon}) < \varepsilon$ and $g=u$ pointwise in $C_{\varepsilon}$.
\end{enumerate}
\end{tw}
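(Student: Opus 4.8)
The plan is to prove the two implications separately, exploiting the characterizations already established. For the direction \textit{(1)} $\Rightarrow$ \textit{(2)}, suppose $(X,\tau,\mathfrak{M},\mu)$ is almost normal and $\mu$ is strongly regular. Fix a measurable function $u:X\to\overline{\mathbb{R}}$ and $\varepsilon\in(0,\infty)$. Since $\mu$ is strongly regular, Theorem~\ref{wLusin_equiv} gives the weak Lusin theorem: there is a closed set $C\subseteq X$ with $\mu(X\setminus C)<\varepsilon/2$ such that $u\vert_C$ is continuous with respect to $\tau\vert_C$. Now apply almost normality to the closed set $C$, the continuous function $f:=u\vert_C$, and the threshold $\varepsilon/2$: this produces a closed set $C_\varepsilon\subseteq C$ and a continuous function $g:X\to\overline{\mathbb{R}}$ with $\mu(C\setminus C_\varepsilon)<\varepsilon/2$ and $g=f=u$ pointwise on $C_\varepsilon$. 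Then $C_\varepsilon$ is closed in $\tau$, $g$ is globally continuous, $g=u$ on $C_\varepsilon$, and
\[
\mu(X\setminus C_\varepsilon)\leq\mu(X\setminus C)+\mu(C\setminus C_\varepsilon)<\varepsilon/2+\varepsilon/2=\varepsilon,
\]
so the strong Lusin theorem holds.

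For the converse \textit{(2)} $\Rightarrow$ \textit{(1)}, assume the strong Lusin theorem holds. First I would deduce strong regularity: the strong Lusin theorem trivially implies the weak Lusin theorem (just discard $g$ and keep $C_\varepsilon$, noting $u\vert_{C_\varepsilon}=g\vert_{C_\varepsilon}$ is continuous as the restriction of a continuous function), so by Theorem~\ref{wLusin_equiv} the measure $\mu$ is strongly regular. The more delicate part is showing $(X,\tau,\mathfrak{M},\mu)$ is almost normal. Let $C\subseteq X$ be closed in $\tau$, let $f:C\to\overline{\mathbb{R}}$ be continuous with respect to $\tau\vert_C$, and fix $\varepsilon\in(0,\infty)$. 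The natural move is to extend $f$ to a measurable function on all of $X$ and then invoke the strong Lusin theorem. Define $\widetilde{f}:X\to\overline{\mathbb{R}}$ by $\widetilde{f}:=f$ on $C$ and $\widetilde{f}:=0$ (say) on $X\setminus C$; since $C$ is closed hence Borel, and $f$ is continuous on $C$ hence Borel measurable as a function on $C$, the extension $\widetilde{f}$ is $\mathfrak{M}$-measurable. Apply the strong Lusin theorem to $\widetilde{f}$ and $\varepsilon$: there is a closed set $D_\varepsilon\subseteq X$ with $\mu(X\setminus D_\varepsilon)<\varepsilon$ and a continuous $F_\varepsilon:X\to\overline{\mathbb{R}}$ with $F_\varepsilon=\widetilde{f}$ pointwise on $D_\varepsilon$. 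Now set $C_\varepsilon:=C\cap D_\varepsilon$, which is closed in $\tau$ and contained in $C$; on $C_\varepsilon$ we have $F_\varepsilon=\widetilde{f}=f$ pointwise, and
\[
\mu(C\setminus C_\varepsilon)=\mu(C\setminus D_\varepsilon)\leq\mu(X\setminus D_\varepsilon)<\varepsilon.
\]
Thus $C_\varepsilon$ and $F_\varepsilon$ witness almost normality.

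The main obstacle I anticipate is the measurability of the extension $\widetilde{f}$ in the converse direction — one must be careful that ``continuous with respect to the subspace topology $\tau\vert_C$'' genuinely yields $\mathfrak{M}$-measurability of $\widetilde{f}$ on $X$. The key point is that for any open $V\subseteq\overline{\mathbb{R}}$, the set $f^{-1}(V)$ is relatively open in $C$, hence of the form $U\cap C$ for some $U\in\tau$, hence Borel in $X$; combined with $C$ being closed (Borel) and $\widetilde{f}$ being constant off $C$, every preimage $\widetilde{f}^{-1}(V)$ lies in ${\it Borel}_\tau(X)\subseteq\mathfrak{M}$. A secondary point worth stating explicitly is that in \textit{(1)} $\Rightarrow$ \textit{(2)}, almost normality is applied to the \emph{particular} closed set $C$ arising from weak Lusin rather than to $X$ itself, which is exactly what Definition~\ref{almost_normal} permits; and in \textit{(2)} $\Rightarrow$ \textit{(1)}, one should note that $F_\varepsilon\vert_{C_\varepsilon}=f$ automatically makes $f$ agree with a continuous function on $C_\varepsilon$, but we do not even need $f$ itself to have been continuous on all of $C$ beyond what is assumed. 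Everything else is a routine measure subadditivity estimate.
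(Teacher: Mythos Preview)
Your proof is correct and follows essentially the same route as the paper's: both directions use Theorem~\ref{wLusin_equiv} to pass between strong regularity and weak Lusin, and in the converse you extend $f$ by zero off $C$, apply strong Lusin, and intersect the resulting closed set with $C$ exactly as the paper does. Your added justification of the measurability of $\widetilde{f}$ is a welcome detail the paper leaves implicit.
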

\begin{proof}
Assume first that \textit{(2)} holds. Since the strong Lusin theorem implies the weak Lusin theorem, we have that $\mu$ is strongly regular by Theorem~\ref{wLusin_equiv}. Thus, we only have to check almost normality of $(X,\tau,\mathfrak{M},\mu)$. Let $C\subseteq X$ be any set that is closed in $\tau$,  $f:C\to \overline{\mathbb{R}}$ be any function that is continuous in $\tau\vert_{C}$, and $\varepsilon\in (0,\infty)$ by any fixed number. We can extend $f$ to measurable function $\tilde{f}:X\to \overline{\mathbb{R}}$ by setting $\tilde{f}=0$ outside of $C$. By applying the strong Lusin theorem to $\tilde{f}$, we obtain a closed set $C_{\varepsilon}\subseteq X$ (with respect to $\tau$) and a continuous function $F: X \to \overline{\mathbb{R}}$ (with respect to $\tau$) such that $\mu(X\setminus C_{\varepsilon})<\varepsilon$ and $F = \tilde{f}$ pointiwse in $C_{\varepsilon}$.
Consider the set $\tilde{C}_{\varepsilon} := C\cap C_{\varepsilon}$, which is closed in $\tau$. Since $\tilde{f} = f$ on $C$ and $\mu(C\setminus \tilde{C}_{\varepsilon})  =  \mu(C\setminus C_{\varepsilon}) \leq \mu(X\setminus C_{\varepsilon}) < \varepsilon$, we conclude that $(X,\tau,\mathfrak{M},\mu)$  is almost normal. Thus, \textit{(1)} holds, as wanted.

Suppose next that \textit{(1)} holds. Let $u:X \to \overline{\mathbb{R}}$ be a measurable function. Since $\mu$ is strongly regular, by Theorem~\ref{wLusin_equiv}, for every $\varepsilon\in (0,\infty)$, we can find closed set $C_{\varepsilon} \subseteq X$ (with respect to $\tau$) such that $\mu(X\setminus C_{\varepsilon}) < \varepsilon/2$ and $u\vert_{C_{\varepsilon}}$ is continuous with respect to $\tau\vert_{C_{\varepsilon}}$. From almost normality of $(X,\tau,\mathfrak{M},\mu)$, we can find a closed set $\tilde{C}_{\varepsilon} \subseteq C_{\varepsilon}$ (with respect to $\tau$) and a continuous function $F: X \to \overline{\mathbb{R}}$ (with respect to $\tau$), such that $\mu(C_{\varepsilon} \setminus \tilde{C}_{\varepsilon}) < \varepsilon/2$ and $F = u\vert_{C_{\varepsilon}}$ pointwise in $\tilde{C}_{\varepsilon}$.
Therefore, $F = u$ pointwise in $\tilde{C}_{\varepsilon}$ and $$\mu(X \setminus \tilde{C}_{\varepsilon}) = \mu(X \setminus {C}_{\varepsilon}) + \mu({C}_{\varepsilon} \setminus \tilde{C}_{\varepsilon})  < \varepsilon.$$
Hence, \textit{(2)} holds and this completes the proof of Theorem~\ref{str_Lus_equiv}.
\end{proof}

Combining Theorems~\ref{str_Lus_equiv} and \ref{wLusin_equiv}, we obtain the following conclusion.

\begin{cor}\label{cor_weak_str}
If a topological measure space $(X,\tau,\mathfrak{M},\mu)$ is almost normal, then the weak Lusin theorem implies the strong Lusin theorem.
\end{cor}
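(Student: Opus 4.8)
The plan is to deduce Corollary~\ref{cor_weak_str} directly from the two characterizations already proved, without reproving anything from scratch. Suppose $(X,\tau,\mathfrak{M},\mu)$ is almost normal and assume that the weak Lusin theorem holds on this space. I would first invoke Theorem~\ref{wLusin_equiv}: since the weak Lusin theorem holds, the measure $\mu$ is strongly regular. Combining this with the standing hypothesis that the space is almost normal, I obtain condition \textit{(1)} of Theorem~\ref{str_Lus_equiv}, namely that $(X,\tau,\mathfrak{M},\mu)$ is almost normal and $\mu$ is strongly regular.

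Next I would apply Theorem~\ref{str_Lus_equiv} in the direction \textit{(1)} $\Rightarrow$ \textit{(2)}: this immediately yields that the strong Lusin theorem holds on $(X,\tau,\mathfrak{M},\mu)$. That is exactly the desired conclusion, so the proof is complete in essentially two lines. There is no real obstacle here — the corollary is purely a matter of chaining together the equivalences: weak Lusin $\Leftrightarrow$ strong regularity (Theorem~\ref{wLusin_equiv}), and strong regularity $+$ almost normality $\Leftrightarrow$ strong Lusin (Theorem~\ref{str_Lus_equiv}). The only thing to be careful about is making sure the hypothesis "almost normal" is retained throughout as a standing assumption on the space, since it is precisely the ingredient that upgrades strong regularity to the strong Lusin conclusion; without it, strong regularity alone only gives the weak version (indeed, Proposition~\ref{norm_almost_Tietze} shows almost normality is tied to normality of the underlying topology, and the strong Lusin theorem genuinely needs a Tietze-type extension property).

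If one wanted a marginally more self-contained phrasing, I could alternatively spell it out: given a measurable $u:X\to\overline{\mathbb{R}}$ and $\varepsilon\in(0,\infty)$, strong regularity (from the assumed weak Lusin theorem via Theorem~\ref{wLusin_equiv}) together with Theorem~\ref{wLusin_equiv} again produces a closed set on which $u$ is continuous with small complement, and then almost normality extends the restricted continuous function to a globally continuous function on $X$ agreeing with $u$ on a slightly smaller closed set of controlled measure, exactly as in the proof of the implication \textit{(1)} $\Rightarrow$ \textit{(2)} of Theorem~\ref{str_Lus_equiv}. But since those steps are already carried out in detail in the proof of Theorem~\ref{str_Lus_equiv}, the cleanest exposition is simply to cite the two theorems. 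I would therefore write the proof as: "Assume the space is almost normal and that the weak Lusin theorem holds. By Theorem~\ref{wLusin_equiv}, $\mu$ is strongly regular. Hence condition \textit{(1)} of Theorem~\ref{str_Lus_equiv} is satisfied, and so by Theorem~\ref{str_Lus_equiv} the strong Lusin theorem holds."
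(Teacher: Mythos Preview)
Your proposal is correct and follows exactly the approach of the paper, which simply states that the corollary is obtained by combining Theorems~\ref{wLusin_equiv} and~\ref{str_Lus_equiv}. Your chain of reasoning (weak Lusin $\Rightarrow$ strong regularity via Theorem~\ref{wLusin_equiv}, then strong regularity $+$ almost normality $\Rightarrow$ strong Lusin via Theorem~\ref{str_Lus_equiv}) is precisely what that one-line justification encodes.
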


\section{The existence of Borel representatives}
\label{sect:BR-BR}
The main goal of this section is to prove an equivalence between Borel regularity and the existence of Borel representatives. More specifically, we prove the following theorem.

\begin{tw}\label{rep-reg}
Let $(X,\tau,\mathfrak{M},\mu)$ be a topological measure space where the measure $\mu$ is $\sigma$-finite.
Then, the following statements are equivalent.
\begin{enumerate}[(1)]
\item The measure $\mu$ is Borel regular.
\item For every measurable function $u:X\to\overline{\mathbb{R}}$, there exist a Borel set $N\subseteq X$ and a Borel function $f:X\to\overline{\mathbb{R}}$ such that 
$\mu(N)=0$ and $f=u$ pointwise in $X\setminus N$.
\end{enumerate}
\end{tw}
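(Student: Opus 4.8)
\textbf{Proof strategy for Theorem~\ref{rep-reg}.} The plan is to prove both implications, with the bulk of the work residing in $(1)\Rightarrow(2)$.

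For the implication $(2)\Rightarrow(1)$, I would argue exactly as in Proposition~\ref{weak_Lus_Bor}: given a measurable set $E\in\mathfrak{M}$, apply the hypothesis to $u:=\chi_E$ to obtain a Borel function $f$ and a Borel null set $N$ with $f=\chi_E$ on $X\setminus N$. Then $B:=f^{-1}(\{1\})\cup N$ is a Borel set containing $E$, and $B\setminus E\subseteq N$, so $\mu(B)=\mu(E)$. Thus $\mu$ is Borel regular. (Note this direction does not even require $\sigma$-finiteness.)

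For $(1)\Rightarrow(2)$, the idea is to reduce to simple functions and exploit Borel regularity set by set, using $\sigma$-finiteness to control the countably many exceptional sets. First I would reduce to the case of nonnegative functions (write $u=u^+-u^-$, or compose with a homeomorphism $\overline{\mathbb{R}}\to[0,1]$). Next, take an increasing sequence of nonnegative simple measurable functions $s_k\uparrow u$ pointwise (standard for measurable $\overline{\mathbb{R}}$-valued functions). Each $s_k=\sum_{i=1}^{m_k}c_{k,i}\chi_{E_{k,i}}$ with $E_{k,i}\in\mathfrak{M}$. By Borel regularity, for each measurable set $E_{k,i}$ there is a Borel set $B_{k,i}\supseteq E_{k,i}$ with $\mu(B_{k,i})=\mu(E_{k,i})$; the point is that one can then arrange $\mu(B_{k,i}\setminus E_{k,i})=0$ \emph{when $\mu(E_{k,i})<\infty$}, and this is where $\sigma$-finiteness enters: write $X=\bigcup_j A_j$ with $\mu(A_j)<\infty$, intersect appropriately, and take a countable union of Borel sets of equal finite measure to produce, for each $E_{k,i}$, a Borel set $B_{k,i}$ with $E_{k,i}\subseteq B_{k,i}$ and $\mu(B_{k,i}\setminus E_{k,i})=0$ (by Borel regularity one also needs a Borel \emph{null} cover of $E_{k,i}\triangle$-type remainders — more precisely apply Borel regularity to $A_j\setminus E_{k,i}$ to get the reverse containment up to null sets, mimicking the proof of $(c)\Rightarrow(a)$ in Theorem~\ref{in-out-reg}). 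Replacing each $E_{k,i}$ by its Borel hull $B_{k,i}$ changes $s_k$ only on the null set $Z_k:=\bigcup_i (B_{k,i}\setminus E_{k,i})$; let $\widetilde s_k$ be the resulting Borel simple function. Set $N:=\bigcup_k Z_k$, a Borel null set. On $X\setminus N$ we have $\widetilde s_k=s_k$ for all $k$, hence $\widetilde s_k\to u$ pointwise on $X\setminus N$. Finally define $f:=\limsup_k \widetilde s_k$ (with the convention making it $\overline{\mathbb{R}}$-valued); this is a Borel function, and $f=u$ on $X\setminus N$. This yields $(2)$.

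The main obstacle is the passage from ``Borel set of equal measure'' to ``Borel set differing by a \emph{null} set,'' since Borel regularity as defined only guarantees $\mu(B)=\mu(E)$, and subtraction of infinite measures is illegitimate; the $\sigma$-finiteness hypothesis is precisely what repairs this, by letting us localize to the finite-measure pieces $A_j$ and take countable unions. A secondary technical point is ensuring Borel measurability of $f$ as an $\overline{\mathbb{R}}$-valued function (handled by working with $\limsup$ of Borel functions, which is Borel) and making sure the reduction to simple functions respects the $\overline{\mathbb{R}}$ values at $\pm\infty$; both are routine once the null-exceptional-set bookkeeping is in place. I would also remark that Example~\ref{ex:Brep}, referenced earlier, shows $\sigma$-finiteness cannot be dropped in $(1)\Rightarrow(2)$.
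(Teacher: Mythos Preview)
Your approach is essentially the same as the paper's: for $(2)\Rightarrow(1)$ both apply the hypothesis to $\chi_E$ and pass to $f^{-1}(\{1\})\cup N$; for $(1)\Rightarrow(2)$ both approximate $u$ by simple functions, replace measurable level sets by Borel supersets via Borel regularity, and collect the exceptional null sets into a single Borel null set $N$. The paper deploys $\sigma$-finiteness a bit more cleanly than your auxiliary lemma: it simply multiplies the approximating simple functions $g_n$ by $\chi_{E_n}$ (where $X=\bigcup_n E_n$ with $\mu(E_n)<\infty$), so that every nonzero level set automatically has finite measure and ``equal measure $\Rightarrow$ null difference'' is immediate. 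One small loose end in your sketch: the set $N=\bigcup_k Z_k$ with $Z_k=\bigcup_i(B_{k,i}\setminus E_{k,i})$ is null but not obviously \emph{Borel}, since the $E_{k,i}$ are only measurable; you need one further application of Borel regularity to cover each $Z_k$ by a Borel null set (the paper does exactly this). Your $\limsup$ definition of $f$ is a clean alternative to the paper's extend-$u|_{X\setminus N}$-by-zero construction.
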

We remark here that the implication \textit{(1)} $\Rightarrow$ \textit{(2)} is a slight refinement of a known result; see, for example, \cite[Proposition~3.3.23]{HKST} and \cite[p.77]{Federer}. Moreover, as the proof of Theorem~\ref{rep-reg} will reveal, the implication \textit{(2)} $\Rightarrow$ \textit{(1)} is true, even if $\mu$ is not $\sigma$-finite.
\begin{proof}
Suppose first that $\mu$ is Borel regular.
Since $\mu$ is $\sigma$-finite, there exists  a family $\{E_{n}\}_{n=1}^\infty$  of measurable sets having finite measure such that $\bigcup_{n=1}^\infty E_{n} = X$.  
We can assume that $E_n \subseteq E_{n+1}$ for every $n \in \mathbb{N}$. Let $u:X\to\overline{\mathbb{R}}$ be a measurable function. Then, we can find a sequence $\{g_n\}_{n=1}^\infty$ of simple functions converging to $u$ pointwise in $X$. Next, for each $n \in \mathbb{N}$, we define $f_n:=g_n \chi_{E_n}$. In this way, we have constructed a sequence $\{f_n\}_{n=1}^\infty$ of simple functions that converges to $u$ pointwise in $X$ and satisfies $\mu(\{x \in X: f_n(x) \neq 0\})< \infty$, for all $n \in \mathbb{N}$. Let us fix $n \in \mathbb{N}$. By the Borel regularity of $\mu$, for every $y \in f_n (X)\cap (\overline{\mathbb{R}} \setminus \{0\})$, there exists a Borel set $B^n_y\in Borel_{\tau}(X)$ such that $f_n^{-1}(\{y\}) \subseteq B^n_y$ and $\mu(f_n^{-1}(\{y\}))= \mu( B^n_y)$. Let 
\[
	b_n:=\sum_{y \in f_n (X)\cap (\overline{\mathbb{R}} \setminus \{0\})} y \chi_{B^n_y}.
\]
Since $f_n (X)$ is a finite set, it is easy to see that $b_n:X\to\overline{\mathbb{R}}$ is a Borel function such that $b_n=f_n$ pointwise a.e. in $X$. By this, and the Borel regularity of $\mu$, we can find a  null set $N_n\in Borel_{\tau}(X)$ such that $b_n =f_n$ pointwise in $X \setminus N_n$. Hence, if we define $N:=\bigcup_{n=1}^\infty N_n$, then we have $b_n=f_n$ pointwise in $X\setminus N$ and so $\{b_n\}_{n=1}^\infty$  converges to $u$ pointwise in $X \setminus N$. Therefore, we have that $u\vert_{X\setminus N}$ is a Borel function (in the topology $\tau\vert_{X\setminus N}$). Now, we extend $u\vert_{X\setminus N}$ by zero to a function $f:X\to\overline{\mathbb{R}}$. Then $f=u$ in $X\setminus N$, where $N \in Borel_\tau(X)$ and $\mu(N)=0$. We claim that $f$ is a Borel function. Suppose that $I\subseteq\overline{\mathbb{R}}$ is an open set. Then $\big(u\vert_{X\setminus N}\big)^{-1}(I)$ is a Borel set and so there exists $B\in Borel_\tau(X)$ such that $\big(u\vert_{X\setminus N}\big)^{-1}(I)=B\cap (X\setminus N)$. If $0\in I$ then
$$
f^{-1}(I)=N\cup\big(u\vert_{X\setminus N}\big)^{-1}(I)=N\cup[B\cap (X\setminus N)]\in Borel_\tau(X),
$$
and if $0\not\in I$ then 
$$
f^{-1}(I)=\big(u\vert_{X\setminus N}\big)^{-1}(I)=B\cap (X\setminus N)\in Borel_\tau(X).
$$
Hence, $f$ is a Borel function, as wanted.

Assume that \textit{(2)} holds. To prove \textit{(1)}, let $A\subseteq X$ be a measurable set. If $\mu(A)=\infty$ then $X\in Borel_\tau(X)$ satisfies $\mu(A)=\mu(X)$. Thus, we can assume that $\mu(A)<\infty$. As such, we can appeal to \textit{(2)} to get a Borel set $N\subseteq X$ and a Borel function $f:X\to\overline{\mathbb{R}}$ such that 
$\mu(N)=0$ and $\chi_A=f$ pointwise in $X\setminus N$. Let $B:=f^{-1}(\{1\})$. Then $B$ is a Borel set such that $B=(A\setminus N)\cup (B\cap N)$. Thus, $\mu\big(B\big)=\mu(A\setminus N)=\mu(A)$ and so $N\cup B$ is a Borel set satisfying $A\subseteq N\cup B$ and $\mu\big(N \cup B\big)=\mu(A)$. This completes the proof of Theorem~\ref{rep-reg}.
\end{proof}

The next example shows that the implication  \textit{(1)} $\Rightarrow$ \textit{(2)}  in Theorem~\ref{rep-reg} may fail to hold if the measure is not $\sigma$-finite.

\begin{ex}
\label{ex:Brep}
There exists a topological measure space $(X,\tau,\mathfrak{M},\mu)$ such that the measure $\mu$ is outer regular (hence, also Borel regular) but not $\sigma$-finite, and condition \textit{(2)} in Theorem~\ref{rep-reg} (existence of Borel representatives) fails to hold.
\end{ex}
\begin{proof}
Let $X:=\{a,b,c\}$ and let $\mathfrak{M}:=2^X$. Consider the topology $\tau:=\big\{\varnothing,X,\{c\}\big\}$, and define a measure $\mu:\mathfrak{M}\to[0,\infty]$, by setting $\mu(\varnothing):=0$, $\mu(\{a\}):=\infty$, $\mu(\{b\}):=\infty$, and $\mu(\{c\}):=1$. In this case, it is easy to check that $\mu$ is not $\sigma$-finite, but is outer regular, and hence Borel regular by Remark~\ref{OR-BR}. Moreover, since $Borel_{\tau}(X)=\big\{\varnothing,X,\{c\},\{a,b\}\big\}$, one can verify that the measurable function $u:=\chi_{\{a\}}$ does not have a Borel representative in the sense described in statement \textit{(2)} in Theorem~\ref{rep-reg}.
\end{proof}
\begin{remark} \label{r52}
Note that the measure $\mu$, defined in the proof of Example~\ref{ex:Brep}, is not strongly regular and so we see that Proposition~\ref{out_reg_sigma_fin} may fail to hold if the measure is not $\sigma$-finite.
\end{remark}
\section{Main result}
\label{sect:mainresults}

In this section, we present the principal result of this article (Theorem~\ref{main}) and provide several examples that illustrate the optimality of the conditions appearing in the formulation of this theorem. 


\begin{tw}\label{main}
Let $(X,\tau,\mathfrak{M},\mu)$ be a topological measure space with the property that there exists a collection $\{O_j\}_{j=1}^\infty$ of open sets satisfying
\begin{equation}\label{OSF-main}
X=\bigcup\limits_{j=1}^\infty O_j
\quad\mbox{and}\quad
\mu(O_j)<\infty\,\,\mbox{ for all }\,\,j\in{\mathbb{N}}.
\end{equation}
Suppose further that every open set $U\subseteq X$ can be written as $U=F \cup Z$, where $F$ is an $F_{\sigma}$ set and  $Z$ is a  null set. Then, the following statements are equivalent. 
\begin{enumerate}[(1)]
\item The measure $\mu$ is Borel regular.

\item Each measurable function has a Borel representative, i.e., for every measurable function $u:X\to\overline{\mathbb{R}}$, there exists a Borel set $N\subseteq X$ and a Borel function $f:X\to\overline{\mathbb{R}}$ such that 
$\mu(N)=0$ and $f=u$ pointwise in $X\setminus N$.

\item The weak Lusin theorem holds, i.e., for every measurable function $u:X\to\overline{\mathbb{R}}$ and any $\varepsilon\in(0,\infty)$, there exists closed set $C_{\varepsilon}\subseteq X$ such that 
$\mu(X\setminus C_{\varepsilon}) < \varepsilon$ and restriction of $u$ to $C_{\varepsilon}$ is continuous with respect to $\tau\vert_{C_{\varepsilon}}$.

\item For every measurable function $u:X\to\overline{\mathbb{R}}$ and any $\varepsilon\in(0,\infty)$, there exists a Borel set $B_{\varepsilon}\subseteq X$ such that $\mu(X\setminus B_{\varepsilon}) < \varepsilon$ and restriction of $u$ to $B_{\varepsilon}$ is continuous.
\end{enumerate}
If, in addition, $(X,\tau)$ is assumed to be normal then each of the statements above are further equivalent to the following statement.
\begin{enumerate}[(1)]\addtocounter{enumi}{4}
\item The strong Lusin theorem holds, i.e., for every measurable function $u:X\to\overline{\mathbb{R}}$ and any $\varepsilon\in(0,\infty)$, there exists a closed set $C_{\varepsilon}\subseteq X$ and a continuous function $g:X\to\overline{\mathbb{R}}$ such that 
$\mu(X\setminus C_{\varepsilon}) < \varepsilon$ and $g=u$ pointwise in $C_{\varepsilon}$.
\item For every measurable function $u:X\to\overline{\mathbb{R}}$ and any $\varepsilon\in(0,\infty)$, there exists a Borel set $B_{\varepsilon}\subseteq X$ and a continuous function $g:X\to\overline{\mathbb{R}}$ such that  $\mu(X\setminus B_{\varepsilon}) < \varepsilon$ and $g=u$ pointwise in $B_{\varepsilon}$.
\end{enumerate}
\end{tw}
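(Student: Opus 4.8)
The plan is to assemble the equivalences from the machinery already developed in the earlier sections, since the standing hypotheses of Theorem~\ref{main} are precisely those needed to glue everything together. Under \eqref{OSF-main} and the assumption that every open set is the union of an $F_\sigma$ set and a null set, Corollary~\ref{Borel-in-out-reg-2} tells us that \emph{Borel regularity, outer regularity, inner regularity, and strong regularity of $\mu$ are all equivalent}. This is the backbone: statement (1) is equivalent to $\mu$ being strongly regular. First I would record this explicitly, then run the chain through the Lusin-type results. By Theorem~\ref{wLusin_equiv}, strong regularity of $\mu$ is equivalent to the weak Lusin theorem, giving (1)$\Leftrightarrow$(3). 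The hypothesis \eqref{OSF-main} also makes $\mu$ $\sigma$-finite (take $A_j := O_j$), so Theorem~\ref{rep-reg} applies and yields (1)$\Leftrightarrow$(2). It remains to fit (4) into the cycle.

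For (4), the implication (3)$\Rightarrow$(4) is immediate, since every closed set is Borel, so a continuous restriction to a closed set $C_\varepsilon$ with $\mu(X\setminus C_\varepsilon)<\varepsilon$ is in particular a continuous restriction to a Borel set of the same measure deficit. Conversely, (4)$\Rightarrow$(1) is exactly the content of Proposition~\ref{weak_Lus_Bor}: the existence, for every measurable $u$ and every $\varepsilon$, of a Borel set $B_\varepsilon$ with $\mu(X\setminus B_\varepsilon)<\varepsilon$ on which $u$ restricts continuously forces $\mu$ to be Borel regular. So (3)$\Rightarrow$(4)$\Rightarrow$(1)$\Rightarrow$(3) closes the loop, and combined with the previous paragraph all of (1)--(4) are equivalent.

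For the final two statements under the additional hypothesis that $(X,\tau)$ is normal: by Proposition~\ref{norm_almost_Tietze}, normality of $(X,\tau)$ implies that the topological measure space $(X,\tau,\mathfrak{M},\mu)$ is almost normal. Hence Theorem~\ref{str_Lus_equiv} gives that the strong Lusin theorem (statement (5)) is equivalent to ``$\mu$ strongly regular and $(X,\tau,\mathfrak{M},\mu)$ almost normal'', which under normality reduces to just strong regularity of $\mu$, i.e., to statement (1). For statement (6), the implication (5)$\Rightarrow$(6) is trivial (closed sets are Borel), and for (6)$\Rightarrow$(1) one argues as in Proposition~\ref{weak_Lus_Bor}: if for every measurable $u$ and $\varepsilon>0$ there is a Borel set $B_\varepsilon$ with $\mu(X\setminus B_\varepsilon)<\varepsilon$ and a continuous $g$ on $X$ with $g=u$ on $B_\varepsilon$, then applying this to $u=\chi_E$ and intersecting $B_\varepsilon$ with the closed set $g^{-1}(\{1\})$ produces Borel outer approximations of $E$ of arbitrarily small measure deficit, hence Borel regularity. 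Alternatively, and more cleanly, one notes (6)$\Rightarrow$(4) since $g=u$ on $B_\varepsilon$ makes $u|_{B_\varepsilon}=g|_{B_\varepsilon}$ continuous, and (4) has already been shown equivalent to (1). I would phrase it this way to avoid repeating the argument.

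The main obstacle—though it is more bookkeeping than genuine difficulty—is making sure the two hypotheses of Theorem~\ref{main} are invoked at exactly the right points: \eqref{OSF-main} is used both to get $\sigma$-finiteness (for Theorem~\ref{rep-reg}) and, together with the $F_\sigma$-plus-null decomposition of open sets, to trigger Corollary~\ref{Borel-in-out-reg-2} so that Borel regularity upgrades to strong regularity (which is what Theorems~\ref{wLusin_equiv} and \ref{str_Lus_equiv} actually require). Without the $F_\sigma$ hypothesis, Borel regularity is strictly weaker than strong regularity (Examples~\ref{ex:wkL-1}, \ref{weakl}), so the equivalences (1)$\Leftrightarrow$(3),(5) would break; I would remark on this to motivate the hypothesis. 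Everything else is a direct citation of the section results, so the proof is essentially a diagram-chase: (1)$\Leftrightarrow$(strong regularity)$\Leftrightarrow$(3) via Cor.~\ref{Borel-in-out-reg-2} and Thm.~\ref{wLusin_equiv}; (1)$\Leftrightarrow$(2) via Thm.~\ref{rep-reg}; (3)$\Rightarrow$(4)$\Rightarrow$(1) via Prop.~\ref{weak_Lus_Bor}; and, under normality, (1)$\Leftrightarrow$(5) via Prop.~\ref{norm_almost_Tietze} and Thm.~\ref{str_Lus_equiv}, with (5)$\Rightarrow$(6)$\Rightarrow$(4) trivial.
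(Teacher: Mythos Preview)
Your proposal is correct and follows essentially the same approach as the paper: both proofs are diagram-chases through the earlier section results, with the same implications (1)$\Leftrightarrow$(2) via Theorem~\ref{rep-reg}, (1)$\Rightarrow$(3) via the upgrade of Borel regularity to strong regularity combined with Theorem~\ref{wLusin_equiv}, (3)$\Rightarrow$(4) trivially, (4)$\Rightarrow$(1) via Proposition~\ref{weak_Lus_Bor}, and under normality (3)$\Rightarrow$(5)$\Rightarrow$(6)$\Rightarrow$(4) via Proposition~\ref{norm_almost_Tietze} and Theorem~\ref{str_Lus_equiv}. The only cosmetic difference is that you invoke Corollary~\ref{Borel-in-out-reg-2} directly for the Borel-regular-to-strongly-regular step, whereas the paper unpacks it as Theorem~\ref{in-out-reg} plus Proposition~\ref{out_reg_sigma_fin}; these are the same argument.
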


Let us remark that the implication \textit{(1)} $\Rightarrow$ \textit{(3)} in Theorem~\ref{main} was proved in \cite{AGS} with the condition in \eqref{OSF-main} replaced by the stronger assumption that $\mu(X)<\infty$.

\begin{proof}
The implications in this theorem follow from results previously proven in this paper. More specifically:
\begin{itemize}
\item $\textit{(1)} \Leftrightarrow \textit{(2)}$ by Theorem~\ref{rep-reg};
\item $\textit{(1)} \Rightarrow \textit{(3)}$ by Theorem~\ref{in-out-reg}, Proposition~\ref{out_reg_sigma_fin}, and Theorem~\ref{wLusin_equiv};
\item $\textit{(3)} \Rightarrow \textit{(4)}$ is obvious;
\item $\textit{(4)} \Rightarrow \textit{(1)}$ by Proposition~\ref{weak_Lus_Bor}.
\end{itemize}
Under normality of $(X,\tau)$, we have that:
\begin{itemize}
\item $\textit{(5)} \Rightarrow \textit{(6)}$ is obvious;
\item $\textit{(6)} \Rightarrow \textit{(4)}$ is obvious;
\item $\textit{(3)} \Rightarrow \textit{(5)}$ by Theorem~\ref{wLusin_equiv}, Proposition~\ref{norm_almost_Tietze}, and Theorem~\ref{str_Lus_equiv}.
\end{itemize}
This completes the proof of Theorem~\ref{main}.
\end{proof}

As the following example illustrates, the equivalence between \textit{(5)} and \textit{(1)} - \textit{(4)}  in Theorem~\ref{main} may fail if the space $(X,\tau)$ is not normal.
\begin{ex}
There exists a topological measure space $(X,\tau,\mathfrak{M},\mu)$ such that $(X,\tau)$ is not normal, \eqref{OSF-main} holds, every open set can be written as the union of an $F_\sigma$ set and a null set,  and for which \textit{(3)} in Theorem~\ref{main} (the weak Lusin theorem) holds and  \textit{(5)} in Theorem~\ref{main} (the strong Lusin theorem) fails.
\end{ex}

\begin{proof}
Let $X:=\{a,b,c\}$. Consider the topology $\tau:=\big\{\varnothing,X,\{c\},\{a,c\},\{b,c\}\big\}$, and define a measure $\mu:\mathfrak{M}\to[0,\infty]$, by setting $\mu := \delta_a + \delta_b$, where $\mathfrak{M}:=2^X$ and $\delta_{x}$ is the Dirac measure concentrated at $x\in X$. In this case, $\mu(X)=2<\infty$ and so $(X,\tau,\mathfrak{M},\mu)$ satisfies \eqref{OSF-main}.
Moreover, $\mu$ is trivially Borel regular since $Borel_{\tau}(X)=2^X$ and it is easy to check that every open set can be written as the union of an $F_\sigma$ set and a null set. Thus, \textit{(1)} - \textit{(3)} in Theorem~\ref{main} hold. 

We want to show that \textit{(5)} in Theorem~\ref{main} fails. Consider the function $u: X\to\mathbb{R}$ defined by setting $u(a):=0$, $u(b):=1$, and $u(c):=2$.
%
This function is trivially measurable. Now let $\varepsilon\in(0,1)$. Then from the definition of $\mu$, we have that $C_\varepsilon=X$ and $C_\varepsilon=\{a,b\}$ are the only closed sets satisfying $\mu(X\setminus C_\varepsilon)<\varepsilon$. Moreover, if $g:X\to\overline{\mathbb{R}}$ is continuous then $g$ must be a constant function and so we immediately conclude that $g\neq u$ pointwise in $C_\varepsilon$. Hence, \textit{(5)} in Theorem~\ref{main} fails, as wanted.
\end{proof}

In the next example, we see that it is possible for the strong Lusin theorem to hold in the absence of normality.
\begin{ex}
There exists a topological measure space $(X,\tau,\mathfrak{M},\mu)$ such that $(X,\tau)$ is not normal, $\mu$ is not trivial, and \textit{(5)} in Theorem~\ref{main} (the strong Lusin theorem) holds.
\end{ex}
\begin{proof}
Let $X:=\{a,b,c\}$. Consider the topology $\tau:=\big\{\varnothing,X,\{c\},\{a,c\},\{b,c\}\big\}$, and define a measure $\mu:\mathfrak{M}\to\mathbb{R}$, by setting $\mu :=\delta_a$, where $\mathfrak{M}:=2^X$. It is easy to check that the measure $\mu$ is strongly regular  since $\{a\}$ is closed and $\{b,c\}$ is null set. We now show that $(X,\tau,\mathfrak{M},\mu)$ is almost normal. It is clear, that for closed sets $ \emptyset, X , \{b\}, \{a\} $ and continuous functions defined on them, we can find continuous extensions defined on whole space. The last closed set that we need to consider is $\{a,b\}$. Let $C:=\{a,b\}$ and $f:C \to \overline{\mathbb{R}}$ be any continuous function with respect to $\tau\vert_{C}$. Given any $\varepsilon\in (0,\infty)$, we take the closed set $C_{\varepsilon}:= \{a\}$. Then $f\vert_{C_{\varepsilon}}$ is now a constant function and $F_{\varepsilon}: X \to \overline{\mathbb{R}}$ given by $F_{\varepsilon}(x) := f(a)$, for all $x\in X$, is a continuous extension of $f\vert_{C_{\varepsilon}}$. From these observations, we have that \textit{(5)} in Theorem~\ref{main}  holds by Theorem~\ref{str_Lus_equiv}.
\end{proof}




In the following example, we see that \textit{(3)} in Theorem~\ref{main} (the weak Lusin theorem) can hold in the absence of \eqref{OSF-main}.

\begin{ex}
Consider the topological measure space $(\mathbb{R},d,\mathfrak{M},\mu)$, where $d$ is the discrete metric and $\mu$ is the counting measure defined on $\mathfrak{M}:=2^{\mathbb{R}}$. Then $\mu$ is a Borel measure that is strongly regular but not $\sigma$-finite. In particular, \textit{(3)} in Theorem~\ref{main} (the weak Lusin theorem) holds but condition \eqref{OSF-main} does not hold.
\end{ex}
\begin{proof}
Since every set is closed, for every $E\subseteq \mathbb{R}$ we obtain immediately
\begin{align*}
0  \leq  \inf_{\substack{C- \text{\textit{closed}},\\ C\subseteq E}} \mu(E \setminus C) \leq \mu(E\setminus E) = 0.
\end{align*}
Hence, $\mu$ is strongly regular by Proposition~\ref{inner=outer}.
On the other hand, the only sets having finite measure are finite sets and so \eqref{OSF-main} can not hold, since $\mathbb{R}$ is uncountable.
\end{proof}

In the following example, we see that the implication \textit{(1)} $\Rightarrow$ \textit{(3)} in Theorem~\ref{main} can fail if we replace \eqref{OSF-main} with the weaker assumption of $\sigma$-finiteness.

\begin{ex}
\label{ex:wkL-1}
There exists a topological measure space $(X,\tau,\mathfrak{M},\mu)$ such that $\mu$ is $\sigma$-finite and Borel regular,  every open set is  $F_\sigma$, $(X,\tau)$ fails 
\eqref{OSF-main}, and \textit{(3)} in Theorem~\ref{main} (the weak Lusin theorem) fails to hold. Consequently, $\mu$ is not strongly regular or outer regular.
\end{ex}
\begin{proof}
We consider the interval $X:=[0,1]$ equipped with Euclidean distance and metric topology. Then, every open set is $F_{\sigma}$. 
Going further, we define a measure $\mu$ on the $\sigma$-algebra of Lebesgue measurable subsets of $[0,1]$ by setting
\[
\mu(A) := \lambda (A) + \sum_{q\in \mathbb{Q}\cap [0,1]} \delta_{q}(A),
\]
where $\lambda$ is the Lebesgue measure and $\delta_{q}$ is the Dirac measure concentrated at $q$. Then \eqref{OSF-main} fails, since every nonempty open set has infinite measure. It is clear that $\mu$ is a $\sigma$-finite, Borel regular measure.

To show that \textit{(3)} in Theorem~\ref{main} fails to hold, let $u:[0,1] \to \mathbb{R}$ be given by  $u:= \chi_{[0,1]\setminus \mathbb{Q}}$. Fix $\varepsilon\in(0,1)$ and let $E\subseteq [0,1]$ be any measurable set, such that $\mu([0,1]\setminus E) < \varepsilon$. Since $\varepsilon<1$, we have
\begin{align*}
[0,1]\setminus E \subseteq [0,1] \setminus \mathbb{Q}
\end{align*}
which means that $\mathbb{Q} \cap [0,1] \subseteq E$. Moreover, $E$ contains at least $1$ irrational number, because if we had $E=\mathbb{Q} \cap [0,1]$, then  $1=\mu([0,1]\setminus E)<1$, which is not possible. As such, from density of rational numbers in $[0,1]$ we conclude that  $u\vert_{E}$ cannot be continuous.
\end{proof}

As the following example illustrates, the implication \textit{(1)} $\Rightarrow$ \textit{(3)} in Theorem~\ref{main} can fail to hold if we omit the assumption that every open set is the union of an $F_\sigma$ set and a null set.
\begin{ex} \label{weakl}
There exists a topological measure space $(X,\tau,\mathfrak{M},\mu)$ such that  $\mu$ is Borel regular,
$(X,\tau)$ satisfies \eqref{OSF-main}, 
not every open set is the union of an $F_\sigma$ set and a null set, and \textit{(3)} in Theorem~\ref{main} (the weak Lusin theorem) fails to hold. Consequently, $\mu$ is not strongly regular.
\end{ex}
\begin{proof}
Let $X:=\{a,b,c\}$. Consider the topology $\tau:=\big\{\varnothing,X,\{c\},\{a,c\},\{b,c\}\big\}$, and define a measure $\mu:\mathfrak{M}\to[0,\infty]$, by setting $\mu :=\delta_a + \delta_b + \delta_c$, where $\mathfrak{M}:=2^X$. In this case, $\mu(X)=3<\infty$ and so $(X,\tau,\mathfrak{M},\mu)$ satisfies \eqref{OSF-main}. 
Moreover, $\mu$ is trivially Borel regular since $Borel_{\tau}(X)=2^X$. We want to show that \textit{(3)} in Theorem~\ref{main} fails. Let $u: X\to\mathbb{R}$ by any nonconstant measurable function, and let $\varepsilon\in(0,1)$. Then, from the definition of $\mu$, we have that $C_\varepsilon=X$ is the only measurable set satisfying $\mu(X\setminus C_\varepsilon)<\varepsilon$. However, if $u\vert_{C_{\varepsilon}}=u$ is continuous then $u$ must be a constant function, which is not possible. Hence, \textit{(3)} in Theorem~\ref{main}  fails to hold, as wanted. 
\end{proof}

\end{document}